\newcommand\encircle[1]{%
  \tikz[baseline=(X.base)] 
    \node (X) [draw, shape=circle, inner sep=0] {\strut #1};}
\newcommand{\mathsym}[1]{{}}
\newcommand{\unicode}[1]{{}}
\newcommand{\R}{\ensuremath{\mathbb{R}}}
\newcommand{\CO}{\ensuremath{\mathcal{O}}}
\newcommand{\f}{\varphi}
\newcommand{\e}{\varepsilon}
\newcommand{\sgn}{\mathrm{sign}}
\newtheorem {proposition}{Proposition}
\newtheorem {lemma} {Lemma}
\newtheorem {example} {Example}
\newtheorem {remark}{Remark}
\newtheorem {mtheorem} {Theorem}
\def\R{\mathbb R}
\title[Cyclicity of monodromic tangential singularities]
{Unveiling the Cyclicity of Monodromic Tangential Singularities: Insights Beyond the Pseudo-Hopf Bifurcation}
\author[D. D. Novaes and L. A. Silva]
{Douglas D. Novaes and Leandro A. Silva}
\address{Departamento de Matem\'{a}tica - Instituto de Matem\'{a}tica, Estat\'{i}stica e Computa\c{c}\~{a}o Cient\'{i}fica (IMECC) - Universidade
Estadual de Campinas (UNICAMP), \ Rua S\'{e}rgio Buarque de Holanda, 651, Cidade Universit\'{a}ria Zeferino Vaz, 13083-859, Campinas, SP,
Brazil}
\email{ddnovaes@unicamp.br}
\email{lasilva@ime.unicamp.br}
\begin{document}

\subjclass[2010]{34C23,34A36,37G15}

\keywords{Filippov systems, monodromic tangential singularities, cyclicity problem, pseudo-Hopf bifurcation, small amplitude limit cycles}

\maketitle
\begin{abstract}
The cyclicity problem, crucial in analyzing planar vector fields, consists in estimating the number of limit cycles emanating from monodromic singularities.  Traditionally, this estimation relies on Lyapunov coefficients. However, in nonsmooth systems, besides the limit cycles bifurcating by varying the Lyapunov coefficients, monodromic singularities on the switching curve can always be split apart yielding, under suitable conditions, a sliding region and an additional limit cycle surrounding it. This bifurcation phenomenon, known as pseudo-Hopf bifurcation, has enhanced lower-bound cyclicity estimations for monodromic singularities in Filippov systems. In this study, we push beyond the pseudo-Hopf bifurcation, demonstrating that the destruction of $(2k,2k)$-monodromic tangential singularities yields at least $k$ limit cycles surrounding sliding segments. This new bifurcation phenomenon expands our understanding of limit cycle bifurcations in nonsmooth systems and, in addition to the theoretical significance, has practical relevance in various applied models involving switches and abrupt processes.
\end{abstract}

\section{Introduction}
The classical and important topic in the qualitative theory of planar vector fields known as the {\it cyclicity problem} addresses the estimation of the number of limit cycles bifurcating from a monodromic singularity $p$ (center or focus) when perturbed within a given family $\mathcal{F}$ of vector fields. Such a number of limit cycles is called {\it cyclicity} of the monodromic singularity $p$ in $\mathcal{F}$. This problem has garnered extensive attention, both in the realm of planar polynomial vector fields (see, for instance, \cite{PT19,romanovski2009center}) and in the context of piecewise smooth vector fields (see, for instance, \cite{gassulcoll,cruz19,GasTor03,GouTor20}). The study of limit cycles emanating from monodromic singularities often employs Lyapunov coefficients, denoted as $V_i$'s, which are derived from the coefficients in the power series expansion of the first-return map $\pi(x)$ around the singularity. This approach has been explored in both smooth systems \cite{romanovski2009center} and nonsmooth systems \cite{gassulcoll,GasTor03,novaessilva2020}, particularly when $\pi$ exhibits analytic behavior in a neighborhood of $x=0$, where $\pi(x)-x=\sum_{i\geq 1} V_i x^i$ for $x$ close to $0$.

Differential equations featuring discontinuities constitute a crucial class of dynamical systems with wide-ranging applications in applied science, particularly in scenarios involving switches and abrupt processes, (see, for instance \cite{andronov1966theory,CRM,BBCK,Mike18}). Filippov's book \cite{Filippov88} introduced a rigorous mathematical framework for nonsmooth differential equations, now commonly referred to as Filippov systems. The notion of singularity in Filippov systems encompasses traditional singular points found in smooth differential systems, as well as new types of points arising from nonsmoothness, namely, pseudo-equilibria and tangency points (see, for example, \cite{guardia2011generic}). These additional singularities can give rise to local monodromic behavior, thereby expanding the range of singularities for which the cyclicity problem is well-posed.

As mentioned before, the cyclicity problem has been investigated in Filippov systems. For example, Filippov computed some Lyapunov coefficients for a monodromic singularity of fold-fold type (also called sewed focus) in Chapter 4 of his book \cite{Filippov88}.  Coll et al., in \cite{CGP95}, obtained the first seven Lyapunov constants for monodromic singularities of focus-focus type in discontinuous Li\'{e}nard differential equations; then,  in \cite{coll1999}, they  derived general expressions for the Lyapunov constants for monodromic singularities of focus-focus type in some families of discontinuous Li\'{e}nard differential equations. They also addressed the cyclicity problem for monodromic singularities of focus-focus type, fold-fold type, and focus-fold type, explicitly computing the first three Lyapunov coefficients for these types of singularities \cite{gassulcoll}. Gasull and Torregrosa also studied the cyclicity problem for monodromic singularities of focus-focus type in several classes of piecewise smooth systems \cite{GasTor03}. More recently, in \cite{Glendinning2023}, Glendinning et. al studied more general sewed focuses and demonstrated that, in the non-analytic case, this singularity can have a sequence of limit cycles accumulating on it.

In this paper, we focus on $(2k,2k)$-monodromic tangential singularities, which arise when $2k$-multiplicity tangencies between the switching curve and the vector fields coalesce and exhibit locally monodromic behavior around the singularity. Recently, in \cite{novaessilva2020}, a recursive formula was introduced for computing all the Lyapunov coefficients associated with these singularities. For further discussion on the isochronicity and criticality problems related to this type of singularity, see \cite{NS22}. We note that for $k=1$, this class of singularities corresponds to the fold-fold type.

The discontinuities presented in Filippov systems can induce a richer variety of bifurcation phenomena, some of them unique to nonsmooth systems, often called by {\it discontinuity-induced bifurcations} (DIBs) (see \cite{bernardo} for a discussion on DIBs). Regarding the cyclicity problem, in the nonsmooth context, besides the limit cycles bifurcating by varying the Lyapunov coefficients, monodromic singularities lying on the switching curve can always be split apart generating, under suitable conditions, a sliding region and an extra limit cycle surrounding it (see Figure \ref{figpseudo}). This DIB is often referred to as {\it pseudo-Hopf bifurcation} and it has been firstly reported by Filippov in his book \cite{Filippov88}. This bifurcation phenomenon has been used to investigate the cyclicity of monodromic singularities in Filippov systems, which allows to increase in the obtained lower bounds for the cyclicity at least by one  (see, for instance, \cite{cruz19,GouTor20,novaessilva2020}).

In this paper, our objective is to extend and improve the analysis of pseudo-Hopf bifurcations to monodromic tangential singularities. We aim to demonstrate that the number of limit cycles generated by destroying such singularities increases with the degeneracy. Specifically, we will establish that the destruction of $(2k,2k)$-monodromic tangential singularities leads to the emergence of at least $k$ small amplitude limit cycles surrounding sliding segments (see Figure \ref{fig:main}). The formal definition of monodromic tangential singularities is provided in Section \ref{sec:mts} below. Additionally, the formal statement of the pseudo-Hopf bifurcation for such singularities can be found in Section \ref{sec:phb} (see also the Appendix). Our main result is presented in Section \ref{sec:main}.

It is worth mentioning that, in addition to the theoretical significance that underpins the study of local bifurcation of limit cycles in nonsmooth dynamical systems (here, we must refer to \cite{SIMPSON20182439}, where $20$ geometric mechanisms by which limit cycles are created locally in planar piecewise smooth vector fields are discussed), the practical relevance of this research is underscored by its connection to observed phenomena in various applied models. For instance, the local bifurcation of limit cycles is a phenomenon exhibited in several applied piecewise smooth models, including neuron models \cite{HB,NS,mckean}, the Gause predator-prey model \cite{krivan}, and models of car braking systems \cite{zou}. By drawing parallels with these applied models, the study of the new bifurcation phenomenon presented in this paper gains further importance and relevance.

\subsection{Monodromic tangential singularities}\label{sec:mts}
Consider the following piecewise smooth vector field:
\begin{equation} \label{sistemainicial}
Z(x,y)= \begin{cases}
Z^+(x,y)= \left(\begin{array}{c} X^+(x,y)\\ Y^+(x,y)\end{array}\right), & y > 0,\vspace{0.3cm} \\
Z^-(x,y)=\left(\begin{array}{c} X^-(x,y)\\ Y^-(x,y)\end{array}\right),& y < 0.
\end{cases}
\end{equation}
Throughout this paper, the Filippov's convention \cite{Filippov88} will be assumed for trajectories of \eqref{sistemainicial}.
 
Recall that the origin $(0,0)$ is a {\it contact of multiplicity $n$} (or {\it order} $n-1$) between $Z^{+}$ and $\Sigma=\{(x,y)\in\R^2:\,y=0\}$ if $x=0$ is a root of order $n-1$ of $x\mapsto Y^{+}(x,0).$ In other words,
\[
Y^{+}(0,0)=0,\,\dfrac{\partial^i Y^{+}}{\partial x^i}(0,0)=0\, \text{for}\, i\in\{1,\ldots,n-2\},\,\text{and}\, \dfrac{\partial^{n-1} Y^{+}}{\partial x^{n-1}}(0,0)\neq0.
\] 
The same concept holds for $Z^{-}$. In addition, an even multiplicity contact, let us say $n=2k,$ is called {\it invisible} for $Z^+$ and $Z^-$ when 
\[
\dfrac{\partial^{2k-1} Y^{+}}{\partial x^{2k-1}}(0,0)<0\,\text{and}\, \dfrac{\partial^{2k-1} Y^{-}}{\partial x^{2k-1}}(0,0)>0,
\]
respectively. Otherwise, it is called {\it visible}.

Accordingly, we say that \eqref{sistemainicial} has a {\it $(2k^+,2k^-)$-monodromic tangential singularity} at the origin, provided that:

\bigskip

\noindent{\bf C1.} $X^{\pm}(0,0)\neq0,$ $Y^{\pm}(0,0)=0,$ $\dfrac{\partial^i Y^{\pm}}{\partial x^i}(0,0)=0$  for  $i\in\{1,\ldots,2k^{\pm}-2\},$ and  $\dfrac{\partial^{2k^{\pm}-1} Y^{\pm}}{\partial x^{2k^{\pm}-1}}(0,0)\neq0$;

\medskip

\noindent{\bf C2.} $X^+(0,0)\dfrac{\partial^{2k^{+}-1} Y^{+}}{\partial x^{2k^{+}-1}}(0,0)<0$ and $X^-(0,0)\dfrac{\partial^{2k^{-}-1} Y^{-}}{\partial x^{2k^{-}-1}}(0,0)>0;$

\bigskip

\noindent{\bf C3.} $X^+(0,0) X^-(0,0)<0.$

\bigskip

\noindent 
Briefly speaking, {\bf C1} and {\bf C2} ensure that the origin is an invisible $2k^{+}$-multiplicity (resp. $2k^{-}$-multiplicity) contact between $\Sigma$ and the vector field $Z^{+}$ (resp.  $Z^{-}$) and {\bf C3} ensures that a first-return map is well defined on $\Sigma$ around the origin (see Figure \ref{fig:2kmts}).

	\begin{figure}[H]
     	\centering 
     	\begin{overpic}[scale=0.6]{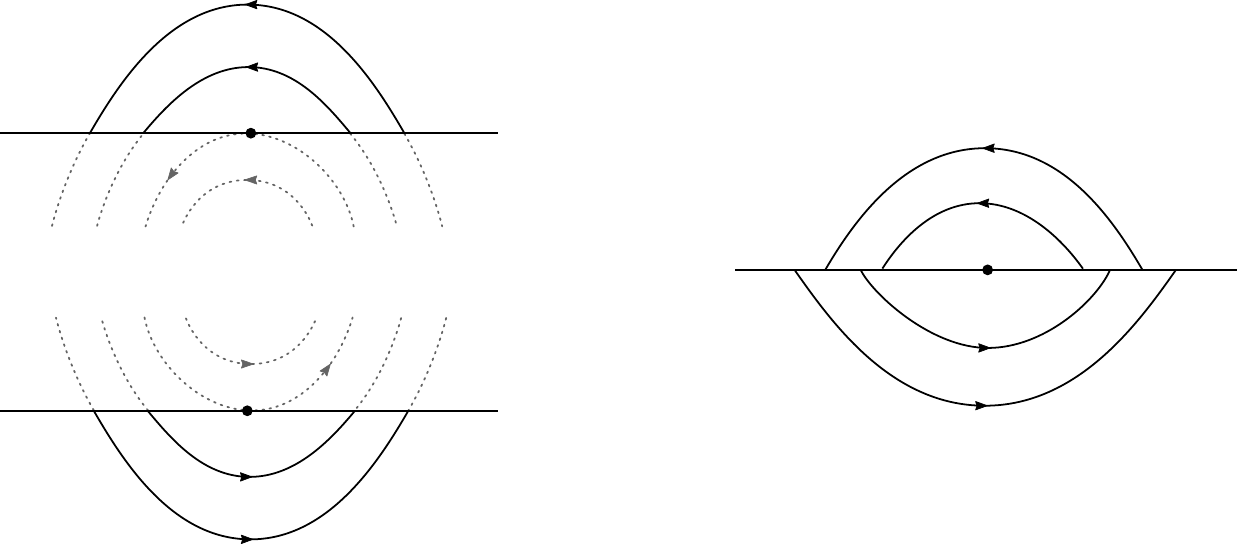}
     		\put(41,32.5){$\Sigma$}
     		\put(21,30){\small $0$}
     		\put(41,10){$\Sigma$}
     		\put(21,8){\small $0$}
     		\put(101,22){$\Sigma$}
     		\put(80,19){\small $0$}
     		\put(32,42){$Z^+$}
     		\put(32,0){$Z^-$}
     		\put(92,30){$Z^+$}
     		\put(92,12){$Z^-$}
     		\put(50,21){$\longrightarrow$}
     	\end{overpic}
     	\vspace{0.5cm}
     	\caption{Invisible $2k$-multiplicity contact between $\Sigma$ and the vector fields $Z^+$ and $Z^-$. The concatenation of the orbits of $Z^+$ and $Z^-$ through $\Sigma$ allows the definition of  a first-return map. The origin is a $(2k,2k)$-monodromic tangential singularity of $Z$.}
     	\label{fig:2kmts}
     \end{figure}

In \cite{novaessilva2020}, the $(2k^+,2k^-)$-monodromic tangential singularities were deeply investigated. In particular, a recursive formula for all the Lyapunov coefficients was provided. When dealing with monodromic tangential singularities, instead of working with the first-return map, it is more convinient to consider the {\it displacement function}
\begin{equation}\label{def:disp}
\Delta(x)=(\varphi^+(x)-\varphi^-(x))\delta,
\end{equation}
where  
\[
\delta=\sgn(X^+(0,0))=-\sgn(X^-(0,0)),
\]
and $\varphi^{+}$ and $\varphi^{-}$ are the half-return maps defined on $\Sigma$ around $0$ by means of flows of the $Z^{+}$ and $Z^{-}$ restricted, respectively, to $\Sigma^{+}=\{(x,y):\, y\geq 0\}$ and $\Sigma^{-}=\{(x,y):\, y\leq 0\}$ (see Figure \ref{involucoes}). Such maps are involutions satisfying $\varphi^{+}(0)=\varphi^-(0)=0$, which are analytic provided that $Z^+$ and $Z^-$ are analytic (see \cite[Theorem A]{novaessilva2020}). In this case, the Lyapunov coefficients are determined by the power series expansion of $\Delta$ around $x=0$ as follows:
\begin{equation}\label{expansion}
\Delta(x)= \sum_{i=1}^{\infty}V_i x^i.
\end{equation}
\begin{figure}[H]
	\centering 
	\begin{overpic}[scale=0.6]{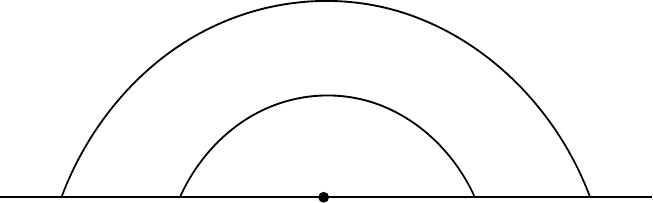}
		\put(7,-6){$x_1$}	
		\put(19,-6){$\varphi^+(x_2)$}	
		\put(50,-6){$0$}
		\put(80,-6){$\varphi^+(x_1)$}
		\put(69,-6){$x_2$}
		\put(102,0){$\Sigma$}
	\end{overpic}
\vspace{0.5cm}
	\caption{Illustration of half-return map $\varphi^+.$}
	\label{involucoes}
\end{figure}

A $(2k^+,2k^-)$-monodromic tangential singularity can be a center, provided that the displacement function is identically zero, or a focus. In the last case, the singularity is attracting or repelling provided that the displacement function is increasing or decreasing close to the origin. By the definition of the Lyapunov coefficients, $V_i$'s, they can reveal if the singularity is a focus and also provide its stability. Indeed, the existence of a  non-vanishing Lyapunov coefficient implies that the singularity is a focus and the sign of the first non-vanishing Lyapunov coefficient provides that the focus is attracting or repelling if it is negative or positive, respectively. In \cite[Theorem B]{novaessilva2020}, it was established that the index of the first non-vanishing Lyapunov coefficient of a monodromic tangential singularity is always even. Additionally, \cite[Theorem C]{novaessilva2020} provided a recursive formula to compute all Lyapunov coefficients. 

\subsection{Canonical form and the second Lyapunov coefficient}\label{sec:cf} The study in \cite{novaessilva2020} was based on a canonical form for Filippov systems around a $(2k^+, 2k^-)$-monodromic tangential singularity, introduced in \cite{ANDRADE2023101397}, which we will discuss in the sequel.

For a Filippov system \eqref{sistemainicial} with a $(2k^{+}, 2k^{-})$-monodromic tangential singularity at the origin, since $X^{\pm}(0,0)\neq0,$ there exists a small neighborhood $U$ of the origin such that $X^{\pm}(x,y) \neq0$ for all $(x,y) \in U.$  Taking into account that $|X^{\pm}(x,y)| = \pm \delta X^{\pm}(x,y)$ for every $(x,y)\in U,$ a time rescaling can be performed in order to transform the Filippov system \eqref{sistemainicial} restricted to $U$ into
 \begin{equation}\label{cf}
(\dot{x},\dot{y}) = \tilde{Z}(x,y)=\begin{cases}
(\delta, \eta^+(x,y)), &y > 0, \\
(-\delta, \eta^-(x,y)), &y < 0,
\end{cases}
\end{equation}
 where 
\begin{equation} \label{eq:eta}
\eta^{\pm}(x,y) :=\dfrac{Y^{\pm}(x,y)}{|X^{\pm}(x,y)|}= \pm\delta\dfrac{Y^{\pm}(x,y)}{X^{\pm}(x,y)}. 
\end{equation}
In addition, working out the condition {\bf C1}, one can see that $\eta^{\pm}(x,0) = a^{\pm}x^{2k^{\pm}-1} + x^{2k^{\pm}}f^{\pm}(x)$ and, therefore,
\begin{equation}\label{etapm}
\eta^{\pm}(x,y) = a^{\pm}x^{2k^{\pm}-1} + x^{2k^{\pm}}f^{\pm}(x) + yg^{\pm}(x,y).
\end{equation} 
The coefficients $a^{\pm}$ and the functions $f^{\pm}(x)$ and $g^{\pm}(x,y)$ can be computed from \eqref{etapm} as follows
\begin{equation} \label{value:a}
a^{\pm}=\dfrac{1}{(2k^{\pm}-1)!|X^{\pm}(0,0)|}\dfrac{\partial^{2k^{\pm}-1}Y^{\pm}}{\partial x^{2k^{\pm}-1}}(0,0),
\end{equation}
\begin{equation}\label{auxfunc}
\begin{array}{l}
f^{\pm}(x)=\dfrac{\pm \delta Y^{\pm}(x,0)-a^{\pm}x^{2k^{\pm}-1} X^{\pm}(x,0)}{x^{2k^{\pm}} X^{\pm}(x,0)},\,\,\,\text{and}\vspace{0.2cm}\\
g^{\pm}(x,y)=\dfrac{\pm X^{\pm}(x,0)Y^{\pm}(x,y) \mp X^{\pm}(x,y)Y^{\pm}(x,0)}{y \delta X^{\pm}(x,y)X^{\pm}(x,0)}.
\end{array}
\end{equation}
Notice that the functions $f^{\pm}(x)$ and $g^{\pm}(x,y)$ are as smooth as $Z^{\pm}$ in a neighborhood of $x=0$ and $(x,y)=(0,0)$, respectively. 

Then, define $f^{\pm}_0=f^{\pm}(0)$ and  $g^{\pm}_{0,0}=g^{\pm}(0,0)$. From \cite[Corollary 1]{novaessilva2020} (see also \cite{gassulcoll}, in the case $k^+=k^-=1$), we have that the second Lyapunov coefficient is given by
\begin{equation}\label{eq:coefV2NP}
V_2=\delta(\alpha_2^+-\alpha_2^-), \,\,\, \text{where}\,\,\,\alpha_2^{\pm}=\dfrac{-2f^{\pm}_0\pm2 \delta a^{\pm} g^{\pm}_{0,0}}{a^{\pm}(2k^{\pm}+1)}.
\end{equation}
Indeed, denote by $\phi^{\pm}(t,x_0) = (x^{\pm}(t,x_0), y^{\pm}(t,x_0))$ the solutions of $(\dot{x},\dot{y}) = (\pm \delta, \eta^{\pm}(x,y))$
with initial condition $\phi^{\pm}(0,x_0) = (x^{\pm}(0,x_0), y^{\pm}(0,x_0))=(x_0,0)\in \Sigma.$ Notice that $x^{\pm}(t,x_0)=x_0 \pm \delta t,$ so that $x^{\pm}(t,x_0) = 0$ if, and only if,  $t= \mp \delta x_0.$ Thus, by defining
\begin{equation}\label{muproof}
\mu^{\pm}(x_0)=y^{\pm}(\mp \delta x_0,x_0),
\end{equation} 
we see that $\mu^{\pm}(x_0)=\mu^{\pm}(\varphi^{\pm}(x_0)),$ for every $x_0$ in a neighborhood of the origin (see Figure \ref{mapadobradobra}). Developing the relationship \eqref{muproof} in power series around $x_0=0$, we can see that
\[
\varphi^{\pm}(x)=-x+\alpha_2^{\pm} x^2+\mathcal{O}(x^3),
\] 
where $\alpha_2^{\pm}$ is given as in \eqref{eq:coefV2NP}. Thus, taking \eqref{def:disp} into account, we get $V_2$ as given by \eqref{eq:coefV2NP}.
\begin{figure}[H]
	\centering
	\vspace{0.5cm}
	\begin{overpic}[scale=0.6]{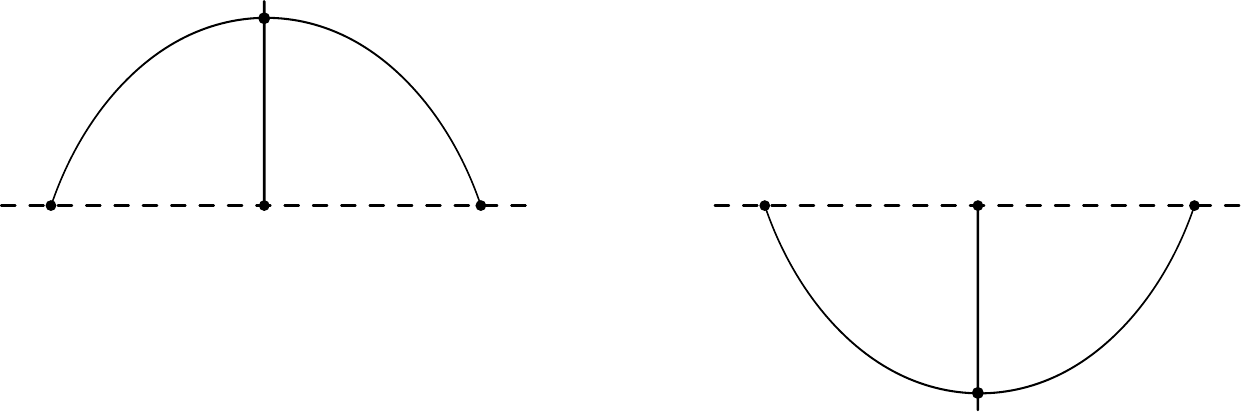}
		\put(44,16){$\Sigma$}
		\put(101,16){$\Sigma$}
		\put(9.5,34){$\mu^+(x_0)=\mu^+(\varphi^+(x_0))$}
		\put(67,-2){$\mu^-(x_0)=\mu^-(\varphi^-(x_0))$}
		\put(3,13.5){$x_0$}
		\put(60,18){$x_0$}
		\put(32,13.5){$\varphi^+(x_0)$}
		\put(90,18){$\varphi^-(x_0)$}
	\end{overpic}
	\vspace{0.5cm}
	\caption{Representation of the maps $\mu^{+}$ and $\mu^{-}$ satisfying the relationship \eqref{muproof}.}
	\label{mapadobradobra}
\end{figure}

\subsection{The Pseudo-Hopf Bifurcation}\label{sec:phb}
The Pseudo-Hopf bifurcation has been firstly mentioned by Filippov in his book \cite{Filippov88} (see item b of page 241). In what follows we present the formal statement of the pseudo-Hopf bifurcation in the case of the so-called invisible two-fold singularity which, in our terminology,  corresponds to a $(2,2)$-monodromic tangential singularity.

\begin{proposition}\label{prop:phd1}
Assume that the Filippov system $Z$, given by \eqref{sistemainicial}, has a $(2,2)$-monodromic tangential singularity at the origin  with non-vanishing second Lyapunov coefficient $V_2$. 
Consider the 1-parameter family of Filippov systems
\begin{equation}\label{Zb}
Z_{b}(x,y)=\begin{cases}
Z^+(x-b,y),&y>0,\\
Z^-
(x,y),&y<0.
\end{cases}
\end{equation}
Then, given a neighborhood $U\subset \R^2$ of $(0,0),$ there exists a neighborhood $I\subset\R$ of $0$ such that the following statements hold for every $b\in I$:
\begin{itemize}
\item If $\sgn(b)=-\sgn(\delta V_2)$, then $Z_{b}$ has a hyperbolic limit cycle in $U$ surrounding a sliding segment  (see Figure \ref{figpseudo}). In addition, the hyperbolic limit cycle is stable (resp. unstable) provided that $V_{2}<0$ (resp. $V_{2}>0$).

\item If $\sgn(b)=\sgn(\delta V_2)$, then $Z_{b}$ does not have limit cycles in $U$.
\end{itemize}
\end{proposition}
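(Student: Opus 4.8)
The plan is to reduce everything to the analysis of a perturbed displacement function obtained from the shift. First I would observe that translating $Z^+$ horizontally translates its entire phase portrait, so that the half-return map of $Z_b^+$ is conjugate to that of $Z^+$ by the translation $x\mapsto x+b$; concretely, if $\psi_t^b$ denotes the flow of $Z_b^+$, then $\psi_t^b(x,y)=\psi_t(x-b,y)+(b,0)$, whence
\[
\varphi^+_b(x)=\varphi^+(x-b)+b,\qquad \varphi^-_b=\varphi^-.
\]
In particular the invisible tangency of $Z^+$ moves to $(b,0)$ while that of $Z^-$ stays at the origin, so the contact points split apart. A short sign analysis based on \textbf{C2} (which fixes the signs of $\partial Y^{\pm}/\partial x(0,0)$ so that the product $Y^+(x-b,0)Y^-(x,0)$ is negative precisely on the interval between the two tangencies) shows that a sliding segment opens between $x=0$ and $x=b$, attracting or repelling according to $\sgn(b)$, while the crossing region persists on both outer sides, so that crossing orbits and hence a displacement function remain defined there.

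Next I would introduce the perturbed displacement function $\Delta_b(x)=\delta\big(\varphi^+_b(x)-\varphi^-(x)\big)$, whose simple zeros in the crossing region correspond to hyperbolic crossing limit cycles: the two $\Sigma$-intersections of a single cycle appear as two symmetric zeros, because $x_R=\varphi^-(x_L)$ forces $\Delta_b(x_R)=0$ by the involutive nature of $\varphi^{\pm}_b$. Using $\varphi^+_b(x)=\varphi^+(x-b)+b$, the expansion $\Delta_0=\Delta=V_2x^2+O(x^3)$ (recall $V_1=0$, since by \cite[Theorem B]{novaessilva2020} the first nonvanishing Lyapunov coefficient has even index), and $(\varphi^+)'(0)=-1$ (an orientation-reversing involution fixing $0$), a first-order expansion in $b$ gives
\[
\Delta_b(x)=V_2\,x^2+2\,\delta\, b+O(x^3)+O(bx)+O(b^2).
\]
The dominant balance $V_2x^2+2\delta b=0$ already reveals the dichotomy: real roots $x=\pm\sqrt{-2\delta b/V_2}$ exist exactly when $-2\delta b/V_2>0$, i.e. when $\sgn(b)=-\sgn(\delta V_2)$, and none exist in the complementary sign case.

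To turn this heuristic into a proof I would perform the rescaling $x=\sqrt{|b|}\,u$, under which $\Delta_b=|b|\big(V_2u^2+2\delta\,\sgn(b)\big)+o(|b|)$ uniformly for $u$ in compact sets; an application of the implicit function theorem (or Weierstrass preparation) to the rescaled equation then yields, for $|b|$ small and $\sgn(b)=-\sgn(\delta V_2)$, exactly one pair of simple zeros $u\approx\pm\sqrt{-2\delta\,\sgn(b)/V_2}$, hence a unique crossing limit cycle, and no zeros at all in the opposite sign case. Hyperbolicity is immediate from $\Delta_b'(x_L)\approx 2V_2x_L\neq0$, and the stability follows from the sign of this derivative, reproducing the rule that the cycle is stable iff $V_2<0$ (in agreement with the stability of the focus, which is governed by the first Lyapunov coefficient). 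Finally I would verify the geometric claim that the cycle surrounds the sliding segment: its two $\Sigma$-intersections satisfy $x_L\sim-\sqrt{|b|}<0$ and $x_R=\varphi^-(x_L)\sim\sqrt{|b|}>b$ for small $b$ (as $\sqrt{|b|}\gg|b|$), so the enclosed portion of $\Sigma$ contains the whole sliding segment, after which one rules out spurious local sliding periodic orbits.

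The main obstacle will be this third step: making the leading balance rigorous uniformly in $b$ through the rescaling, so as to guarantee \emph{exactly} one simple zero in one case and none in the other, thereby delivering both existence and uniqueness, while simultaneously controlling the remainders $O(x^3),O(bx),O(b^2)$ on a genuine fixed neighborhood. The second genuine difficulty is the sign bookkeeping, namely tying together $\delta$, the sign of $V_2$, the attracting/repelling nature of the sliding segment, and the stability of the cycle, so that the two bulleted sign conditions come out precisely as stated.
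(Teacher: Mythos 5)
Your proposal is correct and follows essentially the same route as the paper, which proves this statement (in the more general form of Proposition \ref{prop:pseudohopfgeral} in Appendix A) via the identity $\varphi^+_b(x)=\varphi^+(x-b)+b$, the expansion $\Delta(x;b)=V_2x^2+2\delta b+O(x^3)+bO(x)+O(b^2)$, a rescaling $x=\beta y$ with $\beta=\sqrt{|b|}$ followed by the implicit function theorem, and the sign of $\partial_x\Delta$ at the zero for hyperbolicity and stability. The only point you flag as a difficulty but the paper also handles explicitly is the check $x(b)>|b|$ ensuring the cycle encloses the sliding segment; your treatment of the nonexistence case is, if anything, slightly more explicit than the paper's.
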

In Appendix A, we provide an original and more general version of Proposition \ref{prop:phd1} that establishes the bifurcation of a hyperbolic limit cycle from a $(2k^+,2k^-)$-monodromic tangential singularities provided that some Lyapunov coefficient does not vanish.

\begin{figure}[H]
	\bigskip
	\centering
	\begin{overpic}[scale=0.6]{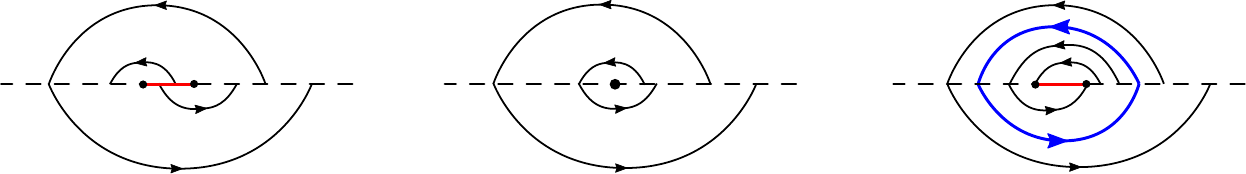}
		\put(10,-3){$b<0$}
		\put(47,-3){$b=0$}
		\put(84,-3){$b>0$}
		\put(29,7){$\Sigma$}
		\put(64.5,7){$\Sigma$}
		\put(100.5,7){$\Sigma$}
	\end{overpic}
	\vspace{0.8cm}
	
	\caption{ In this figure, the origin is a repelling two-fold singularity of $ Z_0$ which undergoes a pseudo-Hopf bifurcation as $b$ varies. For $b\neq0$ the two-fold singularity is split into two regular-fold singularities and between them a sliding segment is created, which is repelling for $b<0$ and attracting for $b>0$. In the last case, an attracting hyperbolic limit cycle surrounding the attracting sliding segment is created. Continuous and dashed segments on $\Sigma$ represent sliding and crossing regions, respectively.}
	\label{figpseudo}
\end{figure}

\subsection{Main Result}\label{sec:main} Our main result improves Proposition \ref{prop:phd1} by showing that at least $k$ limit cycles bifurcate  from $(2k,2k)$-monodromic tangential singularity when it is destroyed by considering a perturbation that adds some ``missing'' lower degree terms to $Z$. This implies that this kind of singularity has cyclicity at least $k$. Thus, any number of limit cycles obtained without destroying the monodromic singularity (for instance, by varying the Lyapunov coefficients) can be increased at least by $k$.

In what follows the concept of {\it norm of a polynomial} is the usual one, that is, the norm of the parameter-vector that defines it.

\begin{mtheorem}\label{thm:unfold} Let $k$ be a positive integer. Assume that the Filippov system $Z$, given by \eqref{sistemainicial}, has a $(2k,2k)$-monodromic tangential singularity at the origin with non-vanishing second Lyapunov coefficient $V_2$. Then, given $\lambda>0$ and a neighborhood $U\subset \R^2$ of $(0,0)$,  there exist polynomials $P^+$ and $P^-$, with degree $2k-2$ and norm less than $\lambda$, and a neighborhood $I\subset \R$ of $0$ such that, for every $b\in I$ satisfying $\sgn(b)=-\sgn(\delta V_2)$, the  Filippov system
\begin{equation} \label{teo:sistemainicialperturbado}
\widetilde Z_b(x,y)=  \begin{cases}
 \widetilde Z^+(x-b,y), & y > 0,\vspace{0.3cm} \\
 \widetilde Z^-(x,y),& y < 0,
\end{cases}\,\,\,\text{with}\,\,\, \widetilde Z^{\pm}(x,y)=\left(\begin{array}{c} X^{\pm}(x,y)\\ Y^{\pm}(x,y)+X^{\pm}(x,y)P^{\pm}(x)\end{array}\right),
\end{equation}
has $k$ hyperbolic limit cycles inside $U$ such that each one of these limit cycles surrounds a single sliding segment (see Figure \ref{fig:main}).  In addition, the hyperbolic limit cycles are stable (resp. unstable) provided that $V_{2}<0$ (resp. $V_{2}>0$).
\end{mtheorem}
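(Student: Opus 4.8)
The plan is to destroy the degenerate $(2k,2k)$-contact by splitting each of the two high-order tangencies into $2k-1$ simple (fold) tangencies, arranged so that $k$ of the resulting fold pairs form genuine $(2,2)$-monodromic two-folds, and then to trigger the pseudo-Hopf bifurcation of Proposition \ref{prop:phd1} simultaneously at each of these $k$ two-folds by means of the single translation parameter $b$. First I would reduce the tangency set of $\widetilde Z^\pm$ along $\Sigma$ to the zero set of an analytic function: since $X^\pm(0,0)\neq0$, the map $x\mapsto \widetilde Y^\pm(x,0)=Y^\pm(x,0)+X^\pm(x,0)P^\pm(x)$ vanishes near $0$ exactly where $h^\pm(x)+P^\pm(x)=0$, with $h^\pm(x):=Y^\pm(x,0)/X^\pm(x,0)$ analytic and vanishing to order $2k-1$ at $0$. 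Fixing $2k-1$ distinct points $x_1<\cdots<x_{2k-1}$ in a small interval $(-\rho,\rho)$, I would take $P^\pm$ to be the unique Lagrange interpolating polynomial of degree $2k-2$ through the data $\{(x_j,-h^\pm(x_j))\}_{j}$, so that $h^\pm+P^\pm$ vanishes at every $x_j$. Writing $x_j=\e r_j$ for fixed distinct $r_j$ and rescaling $x=\e\xi$, the leading coefficient of $h^\pm+P^\pm$ equals that of $h^\pm$, which by condition {\bf C2} is negative for the $+$ field and positive for the $-$ field; the same rescaling shows that $h^\pm+P^\pm$ has no other zeros in $(-\rho,\rho)$, that all $2k-1$ zeros are simple, and that $\|P^\pm\|=O(\e)$, hence $<\lambda$ once $\e$ is small.

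Next I would verify that the $k$ odd-indexed points $x_1,x_3,\dots,x_{2k-1}$ are $(2,2)$-monodromic tangential singularities of $\widetilde Z_0$. Each is a simple zero of $\widetilde Y^\pm(\cdot,0)$, so {\bf C1} holds with $2k^\pm=2$; moreover $\partial_x\widetilde Y^\pm(x_j,0)=X^\pm(x_j,0)\,(h^\pm+P^\pm)'(x_j)$, so the relevant invisibility sign $X^\pm\partial_x\widetilde Y^\pm$ equals $\sgn\big((h^\pm+P^\pm)'(x_j)\big)$. Since the derivative of a polynomial at consecutive simple roots alternates in sign, the leading-coefficient signs fixed above make both folds \emph{invisible} precisely at the odd-indexed points (the even-indexed points are visible--visible, i.e.\ harmless crossing two-folds), which is {\bf C2}. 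Condition {\bf C3}, $X^+X^-<0$, is inherited from the origin by continuity, and the local orientation constant $\sgn(X^+(x_j,0))$ equals $\delta$ for every $j$.

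The heart of the argument—and the step I expect to be the main obstacle—is to show that the second Lyapunov coefficient $V_2^{(j)}$ of each of these $k$ two-folds is nonzero with $\sgn(V_2^{(j)})=\sgn(V_2)$. I would evaluate formula \eqref{eq:coefV2NP} with $2k^\pm=2$ for $\widetilde Z_0$ at the point $x_j$, expressing the auxiliary data $a^\pm$, $f_0^\pm$, $g^\pm_{0,0}$ through $\widetilde Y^\pm=Y^\pm+X^\pm P^\pm$ in the local coordinate $u=x-x_j$, and then perform the rescaling $x_j=\e r_j$, $u=\e\eta$ to extract the leading order in $\e$. The goal is to prove that, to leading order, $V_2^{(j)}$ equals a strictly positive factor times the quantity $\delta(\alpha_2^+-\alpha_2^-)$ appearing in \eqref{eq:coefV2NP}, i.e.\ a positive multiple of $V_2$. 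The difficulty is that the $(2,2)$-version of \eqref{eq:coefV2NP} carries denominators $u^2$ whereas the original $(2k,2k)$-version carries $x^{2k}$, so the match is not formal and requires a careful expansion of $h^\pm+P^\pm$ and of $Y^\pm$ about the collapsing tangencies. Once this sign identity is established, $\sgn(V_2^{(j)})=\sgn(V_2)$ for all $j$ when $\e$ is small.

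Finally, with all $k$ two-folds being monodromic and satisfying $\sgn(\delta V_2^{(j)})=\sgn(\delta V_2)$, I would apply Proposition \ref{prop:phd1} to each of them using the common translation parameter $b$ with $\sgn(b)=-\sgn(\delta V_2)$. Proposition \ref{prop:phd1} produces, in a prescribed neighborhood of each two-fold, a hyperbolic limit cycle surrounding a sliding segment, stable if $V_2^{(j)}<0$ and unstable if $V_2^{(j)}>0$, i.e.\ governed by $\sgn(V_2)$. Choosing $\e$ small localizes the $k$ two-folds inside $U$, and then shrinking the neighborhood $I$ of $b=0$ keeps the $k$ limit cycles pairwise disjoint with each enclosing a single sliding segment, which yields the asserted $k$ hyperbolic limit cycles and completes the proof.
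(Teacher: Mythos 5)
Your overall route is the same as the paper's: interpolate $-Y^{\pm}(\cdot,0)/X^{\pm}(\cdot,0)$ by a degree-$(2k-2)$ polynomial at $2k-1$ rescaled points $\e r_j$ so that the $(2k,2k)$-tangency splits into $2k-1$ simple two-folds, observe that the alternation of the sign of the derivative at consecutive simple roots (together with the signs of the leading coefficients forced by {\bf C2}) makes exactly $k$ of them invisible--invisible, and then fire the pseudo-Hopf bifurcation of Proposition \ref{prop:phd1} at each of these $k$ two-folds with the common parameter $b$. Those parts are correct (the paper keeps the origin as one of the $2k-1$ tangency points by omitting the constant term of $P^{\pm}_{\Lambda}$ and uses the Malgrange preparation theorem to get simplicity of the roots, but that is a cosmetic difference from your Lagrange-interpolation set-up).

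The genuine gap is the step you yourself flag as ``the main obstacle'': showing that each of the $k$ two-folds has a \emph{non-vanishing} second Lyapunov coefficient of the \emph{same sign} as $V_2$. You state this as a goal (``the goal is to prove that, to leading order, $V_2^{(j)}$ equals a strictly positive factor times \dots $V_2$'') but give no argument for it, and without it Proposition \ref{prop:phd1} cannot be invoked and the stability assertion of the theorem is unsupported. This is the technical core of the paper (Proposition \ref{prop:22mts} together with Lemma \ref{lemma1}), and it is genuinely delicate: when you write formula \eqref{eq:coefV2NP} at the collapsing tangency $\e a_i$, the individual quantities $\alpha_{2,i}^{\pm}(\e)$ blow up like $\e^{-1}$, and one must first verify that the two singular parts cancel identically — which the paper does via the identities $s_1^-=-(a^-/a^+)s_1^+$, $s_3^-=-(a^-/a^+)s_3^+$, etc., coming from the explicit structure of the interpolation coefficients — and then use the factorizations of the auxiliary polynomials $T^{\pm}_{\Lambda}$ and $U^{\pm}_{\Lambda}$ (Appendix B) to show that the finite part collapses to $\tfrac{2k+1}{3}V_2+\CO(\e)$, independently of the choice of the $a_i$'s. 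A priori nothing in your construction rules out that some $V_2^{(j)}$ vanishes at leading order or that the $V_2^{(j)}$ carry different signs for different $j$, so the conclusion you need really does require this computation; the proof is incomplete without it.
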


\begin{remark}
In Theorem \ref{thm:unfold}, the perturbation terms, $\widetilde P^{+}(x,y):=X^{+}(x,y)P^{+}(x)$ and $\widetilde P^{-}(x,y):=X^{-}(x,y)P^{-}(x)$, only add some ``missing'' lower degree terms to the canonical form  \eqref{cf} of the Filippov system $Z$. Indeed, the canonical form of the perturbed Filippov system \eqref{teo:sistemainicialperturbado} for $b=0$, $\widetilde Z^{\pm}_0(x,y)$, writes
\begin{equation*} 
(\dot{x},\dot{y}) = \begin{cases}
(\delta, \eta^{+}(x,y)+ \delta P^{+}(x)), &y > 0, \\
(-\delta, \eta^{-}(x,y)- \delta P^{-}(x)), &y < 0.
\end{cases}
\end{equation*}
Notice that  $P^+(x)$ and $P^-(x)$ are polynomials of degree $2k-2$ while, from \eqref{etapm}, $\eta^{\pm}(x,0)=a^{\pm}x^{2k-1} + x^{2k}f^{\pm}(x)$.

The role of the perturbation terms, $\widetilde P^{+}(x,y)$ and $\widetilde P^{-}(x,y)$, consists in destroying the $(2k,2k)$-monodromic tangential singularity of $Z$ at the origin and unfold from it $2k-1$ two-fold singularities for $\widetilde Z_0(x,y)$. Among those singularities, $k$ of them are $(2,2)$-monodromic tangential singularities, from which limit cycles are created via pseudo-Hopf bifurcation for $\widetilde Z_b(x,y)$, with $b\in I$ satisfying $\sgn(b)=-\sgn(\delta V_2)$ (see Figure \ref{fig:main}).
\end{remark}

\begin{figure}[H]
	\bigskip
	\centering
	\begin{overpic}[scale=0.6]{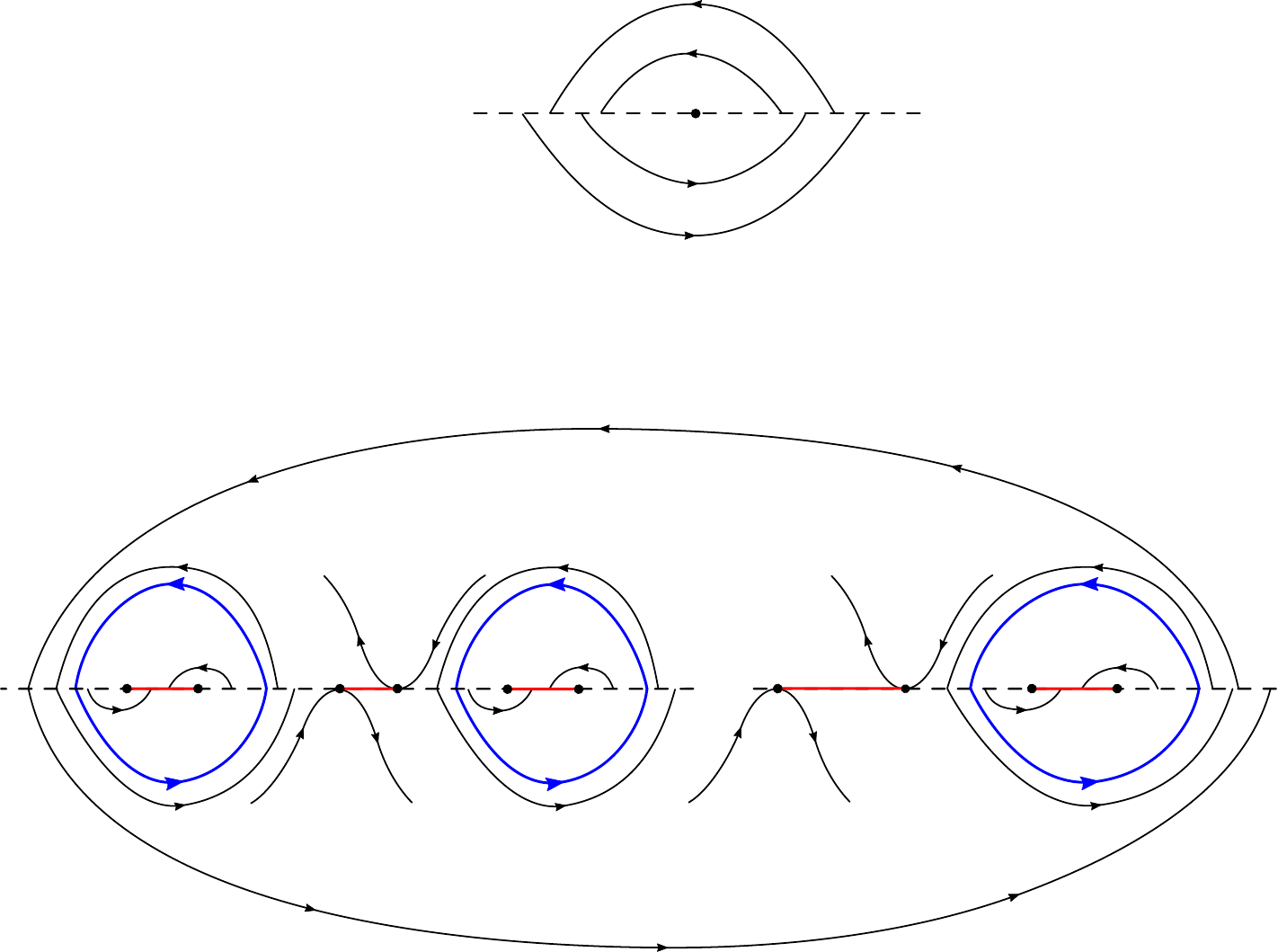}
		\footnotesize
		\put(52,63){$(0,0)$}
		\put(72.5,65){$\Sigma$}
		{\Large
			\put(53,48){$\downarrow$}}
		\put(53,52){$Z$}
		\put(53,43){$\widetilde{Z}_b$}
		\put(8,21.5){$\e a_1$}
		\put(38,21.5){$\e a_2$}
		\put(60,21.5){$\e a_{2k-3}$}
		\put(77,21.5){$\e a_{2k-2}$}
		\put(12,18){$\e a_1+b$}
		\put(26,21.5){$0$}
		\put(31,18){$b$}
		\put(42,18){$\e a_2+b$}
		\put(64,18){$\e a_{2k-3}+b$}
		\put(82,18){$\e a_{2k-2}+b$}
		\put(100.5,19.8){$\Sigma$}
		\tiny
		\put(54,19.9){/$\cdots$/}
	\end{overpic}
	\vspace{0.8cm}
	\caption{Illustration of $k$ limit cycles unfolding from a $(2k^+,2k^-)$-monodromic tangential singularity and surrounding $k$ sliding segments predicted by Theorem \ref{thm:unfold}. Continuous and dashed segments on $\Sigma$ represent sliding and crossing regions, respectively.}
	\label{fig:main}
\end{figure}

The proof of Theorem \ref{thm:unfold} is done in the next section. The idea is to construct polynomials $P_{\Lambda}^+$ and $P_{\Lambda}^-$ that unfold from the origin $2k-1$ two-fold singularities of which $k$ of them are $(2,2)$-monodromic tangential singularities with non-vanishing second Lyapunov coefficients. Then, the proof will follow by applying Proposition \ref{prop:phd1} which ensures the creation of a hyperbolic limit cycle surrounding a sliding segment from each one of the $(2,2)$-monodromic tangential singularities when they are destroyed.

\begin{example}
The following 1-parameter family of polynomial Filippov systems of degree $k$ has been studied in \cite[Example 2]{novaessilva2020}:
\begin{equation} \label{ex2}
Z_{\eta}(x,y)= \begin{cases}
\Big(1, x^{2k-1}(\eta\,x-1)+y \Big),&  y > 0, \\
\Big(-1,x^{2k-1}(x-1) \Big),& y < 0,
\end{cases}
\end{equation}
where $k$ is a positive integer and $\eta\in\R$. The origin is a $(2k,2k)$-monodromic tangential singularity for every $\eta\in\R.$ As an application of Theorem \cite[Theorem D]{novaessilva2020}, it was proved that  for  $\eta>0$ sufficiently small, the Filippov system \eqref{ex2} has an asymptotically stable hyperbolic limit cycle which converges to the origin as $\eta$ goes to zero. Moreover,
\[
V_2=\dfrac{2\eta}{1+2k}.
\]
Now, let $\eta>0$ be fixed such that \eqref{ex2} has an asymptotically stable hyperbolic limit cycle.  Applying Theorem \ref{thm:unfold}, we conclude that the Filippov system \eqref{ex2} can be perturbed inside the space of  polynomial Filippov systems of degree $2k-1$ in such a way that $k$ extra unstable limit cycles emerge from the origin. More specifically, given $\lambda>0$ and a neighborhood $U\subset \R^2$ of $(0,0)$,  there exist polynomials $P^+$ and $P^-$, with degree $2k-2$ and norm less than $\lambda$, and a neighborhood $I\subset \R$ of $0$ such that, for every $b\in I$ satisfying $\sgn(b)=-\sgn(\eta)$, i.e. $b<0$, the  Filippov system 
\begin{equation}\label{eq:example}
\widetilde Z(x,y)=\begin{cases}
\Big(1, (x-b)^{2k-1}(\eta\,(x-b)-1)+y+P^+(x-b) \Big),&  y > 0, \\
\Big(-1,x^{2k-1}(x-1) -P^-(x)\Big),& y < 0,
\end{cases}
\end{equation}
has an asymptotically stable limit cycle and $k$ unstable hyperbolic limit cycles. These $k$ limit cycles are inside $U$. In Figure \ref{fig:simula}, we present numerical simulations performed in Mathematica for $k=2$, $\eta=1/2$, $b=-1/400$, $P^+(x)=-x(x-2)/32$, and $P^-(x)=x(x-1)/16$.

	\begin{figure}[H]
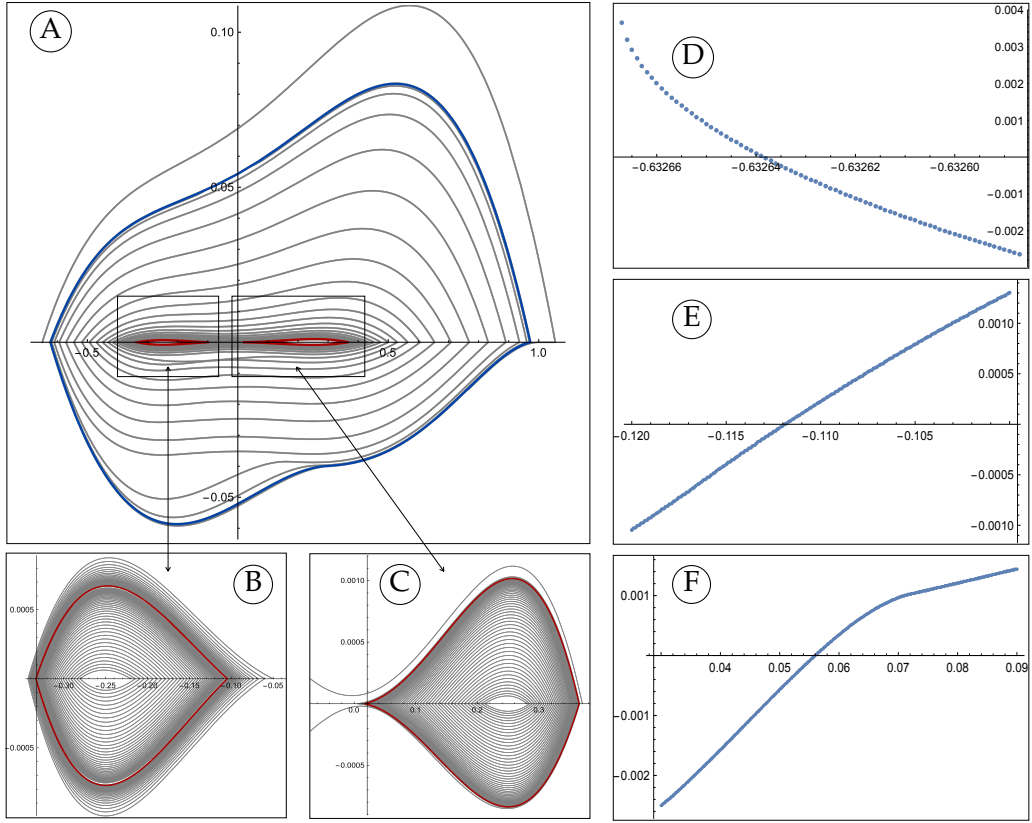

     	\centering 
     	\begin{overpic}[scale=0.3]{simula.pdf}
		\put(3.5,76){\encircle{\emph{A}}}
		\put(23,23){\encircle{\emph{B}}}
		\put(37,23){\encircle{\emph{C}}}
		\put(65,73.5){\encircle{\emph{D}}}
		\put(65,48.4){\encircle{\emph{E}}}
		\put(65,23){\encircle{\emph{F}}}
     	\end{overpic}
     	\vspace{0.5cm}
     	\caption{Numerical simulations were performed in Mathematica to analyze the trajectories of the Filippov system \eqref{eq:example} with parameters $k=2$, $\eta=1/2$, $b=-1/400$, $P^+(x)=-x(x-2)/32$, and $P^-(x)=x(x-1)/16$. Figure (A) illustrates the configuration of three limit cycles where the trajectories converge: the larger limit cycle is asymptotically stable, while the smaller ones are unstable. Figures (B) and (C) provide detailed views of the unstable limit cycles. Figures (D), (E), and (F) display the graph of the displacement function near the asymptotically stable limit cycle, the unstable limit cycle shown in Figure (B), and the unstable limit cycle shown in Figure (C), respectively. Each isolated zero of the displacement function corresponds to a limit cycle of $\widetilde{Z}(x, y)$}
     	\label{fig:simula}
     \end{figure}

\end{example}

\section{Proof of the main result}
 We start the proof of Theorem \ref{thm:unfold} by considering the following $(2k-2)$-parameter family of perturbations of $Z$:
\begin{equation} \label{sistemainicialperturbado}
	 Z_{\Lambda}(x,y)=  \begin{cases}
		 Z_{\Lambda}^+(x,y)=\left(\begin{array}{c} X^+(x,y)\\ Y_{\Lambda}^+(x,y)\end{array}\right), & y > 0,\vspace{0.3cm} \\
		 Z_{\Lambda}^-(x,y) =\left(\begin{array}{c} X^-(x,y)\\ Y_{\Lambda}^-(x,y) \end{array}\right),& y < 0,
	\end{cases}
\end{equation}
where
\begin{equation}\label{YLambda}
Y_{\Lambda}^{\pm}(x,y)=Y^{\pm}(x,y)+X^{\pm}(x,y)P^{\pm}_{\Lambda}(x),
\end{equation}
 and $P^+_{\Lambda}$, $P^-_{\Lambda}$ are continuous $(2k-2)$-parameter families of polynomials of degree $2k-2$ satisfying $P^+_{O}=P^-_{O}=0$, where $\Lambda=(a_1, \dots, a_{2k-2}) \in \R^{2k-2}$ and $O=(0,\dots,0)\in\R^{2k-2}$. In other words,
 \begin{equation}\label{poly}
 P_{\Lambda}^{\pm}(x)=\sum_{j=1}^{2k-2}c_j^{\pm}(\Lambda) x^j,
 \end{equation}
where $c_j^{\pm}$, $j\in \{1,\ldots,2k-2\}$, are continuous functions vanishing at $\Lambda=O$.

In Subsection \ref{sec:P_Lambda}, under the hypotheses of Theorem \ref{thm:unfold}, we construct the perturbation polynomials $P^+$ and $P^-$ with the aid of the auxiliar polynomials $P^+_{\Lambda}$ and $P^-_{\Lambda}$ as defined in \eqref{poly}. Recall that the polynomials required in Theorem \ref{thm:unfold} must has norm as small as one wants. Thus, to address this norm constraint, we introduce a small parameter $\e > 0$, to be chosen later, and work with the polynomials $P_{\e \Lambda}^{+}(x)$ and $P_{\e \Lambda}^{-}(x)$, whose norms approach zero as $\e \to 0$, provided that the functions $c_j^{\pm}$, for $j \in {1,\ldots,2k-2}$, are continuous and vanish at $\Lambda = 0$ as required. In addition, the parameter functions $c_j^{+}$, $c_j^{-}$, $j\in\{1,\ldots,2k-2\}$, that determine the polynomials $P_{\Lambda}^{+}(x)$ and $P_{\Lambda}^{-}(x)$ in \eqref{poly}, will be obtained in order that the origin and the points $(\e a_i,0)$, $i\in\{1,2,\ldots,2k-2\},$ are tangencies between $\Sigma$ and the vector fields $Z_{\e\Lambda}^+$ and $Z_{\e\Lambda}^-$, for $\e>0$ sufficiently small. In Subsection \ref{sec:mts}, we show that all these tangencies have multiplicity two (see Proposition \ref{prop:multp2}) and that, in addition, $k$ of these tangencies are actually $(2,2)$-monodromic tangential singularities of $Z_{\e\Lambda}$ with non-vanishing second Lyapunov coefficient (see Proposition \ref{prop:22mts}). Finally, in Subsection \ref{sec:proofA}, we conclude that, by considering an additional perturbation, each one of these $(2,2)$-monodromic tangential singularities undergoes a pseudo-Hopf bifurcation, which creates $k$ hyperbolic limit cycle surrounding sliding segments. 
 
\subsection{Construction of $P^+_{\Lambda}$ and $P^-_{\Lambda}$}\label{sec:P_Lambda}
Here, given $\Lambda=(a_1,\ldots,a_{2k-2}) \in \mathcal{L}$, with
\begin{equation}\label{domainL}
\mathcal{L}=\{(a_1, \dots, a_{2k-2}) \in \R^{2k-2}; a_i\neq0\,\,\forall i\,\,\text{and}\,\, a_i \neq a_j\,\forall\,i \neq j\},
\end{equation}
we shall construct polynomials $P^+_{\Lambda}$ and $P^-_{\Lambda}$ as \eqref{poly} satisfying 
\begin{equation} \label{relationshipYi0}
	Y_{\e\Lambda}^+(\e a_i,0)=0\,\, \text{and}\,\, Y_{\e\Lambda}^-(\e a_i,0)=0,\,\, \text{for}\,\, i\in\{1,\ldots,2k-2\}.
\end{equation}
The coefficient $\e$ will be chosen to be sufficiently small later on.

Assuming that the Filippov system \eqref{sistemainicial} has a $(2k, 2k)$-monodromic tangential singularity at the origin (see conditions {\bf C1}, {\bf C2}, and {\bf C3}), we have that $X^{\pm}(0,0)\neq0.$
Therefore, as explained in Subsection \ref{sec:cf}, there exists a small neighborhood $U$ of the origin such that $X^{\pm}(x,y) \neq0$, for all $(x,y) \in U.$  Moreover, based on \eqref{eq:eta} and \eqref{etapm}, one has
\begin{equation}\label{quociente}
\dfrac{Y^{\pm}(x,y)}{X^{\pm}(x,y)}=\pm\delta\,\eta^{\pm}(x,y)= \pm\delta\left(a^{\pm}x^{2k-1} + x^{2k}f^{\pm}(x) + yg^{\pm}(x,y)\right),\,\,\, (x,y)\in U,
\end{equation}
where the values $a^{\pm}$ and the functions $f^{\pm}(x)$ and $g^{\pm}(x,y)$ are given by \eqref{value:a} and \eqref{auxfunc}, respectively. Therefore, by taking into account the expression for $Y^{\pm}_{\Lambda}$ given by \eqref{YLambda}, the relationship \eqref{relationshipYi0} is equivalent to 
\begin{equation}\label{relationship2}
P^{\pm}_{\e\Lambda}(\varepsilon a_i) = \mp\delta\varepsilon^{2k-1}(a^{\pm}  a_i^{2k-1}+\varepsilon a_i^{2k}f^{\pm}(\varepsilon a_i)),\,\,\, i\in\{1,\ldots,2k-2\},
\end{equation}
which is an interpolation problem that can be investigated with the help of a Vandermond matrix. Indeed, by denoting
\begin{equation}\label{eq:Yi}
	\xi_i^{\pm}(\varepsilon) :=   \mp\delta\varepsilon^{2k-1}(a^{\pm}  a_i^{2k-1}+\varepsilon a_i^{2k}f^{\pm}(\varepsilon a_i))
\end{equation}
and taking into account the expression of $P^{\pm}_{\Lambda}$ given by \eqref{poly}, the relationship \eqref{relationship2} becomes
\[
	H(\Lambda, \varepsilon)\left(\begin{array}{ccc}
		c_1^{\pm}(\varepsilon\,\Lambda) & \dots & c_{2k-2}^{\pm}(\varepsilon\,\Lambda)
	\end{array}\right)^T=\left(\begin{array}{ccc}
		\xi_1^{\pm}( \varepsilon)  & \dots & \xi_{2k-2}^{\pm}(\varepsilon)
	\end{array}\right)^T,
\]
where $H(\Lambda, \varepsilon)$ is the following Vandermond Matrix:
\begin{equation}\label{vandermonde}
	H(\Lambda, \varepsilon) = \left(\begin{array}{ccc}
		\varepsilon a_1 &  \dots & \varepsilon^{2k-2}a_1^{2k-2} \\
		\vdots &   & \vdots \\
		\varepsilon a_{2k-2} &  \dots & \varepsilon^{2k-2}a_{2k-2}^{2k-2}, \\
	\end{array}\right),
\end{equation}
which is invertible provided that $a_i \neq a_j$, for $i \neq j$, being ensured by the fact that $\Lambda\in\mathcal{L}$. Then, 
\begin{equation}\label{eq:coefficients}
	\left(\begin{array}{ccc}
		c^{\pm}_1(\e\Lambda) &\dots & c^{\pm}_{2k-2}(\e\Lambda)
	\end{array}\right)^T= [ H(\Lambda, \varepsilon) ]^{-1}\left(\begin{array}{ccc}
		\xi^{\pm}_1( \varepsilon) & \dots & \xi^{\pm}_{2k-2}(\varepsilon)
	\end{array}\right)^T.
\end{equation}
Recall that we require the functions $c_j^{\pm}$, for $j \in {1, \ldots, 2k-2}$, to be continuous and vanish at $\Lambda = O$. To ensure that the relationship \eqref{eq:coefficients} satisfies this requirement, it is crucial to analyze how $\e$ appears in the inverse matrix $[ H(\Lambda, \e) ]^{-1}$, as this may introduce some power of $\e$ in the denominator. For this purpose, we employ the well-known formula for the inverse of the Vandermonde matrix (see, for instance, \cite{knuth97}) to compute the inverse of \eqref{vandermonde} as $[H(\Lambda, \varepsilon)]^{-1} = [b_{ij}]_{2k-2}$, where
\[
		b_{ij} = 
		\Bigg(\dfrac{\sum\limits_{\substack{1 \leq m_1 < \dots < m_{n-i} \leq n \\ m_1, \dots, m_{n-i} \neq j}} (-1)^{j-1} \varepsilon^{2k-2-i}a_{m_1}\dots a_{m_{n-i}}}{\varepsilon a_ j\prod\limits_{\substack{1 \leq m \leq n \\ m \neq j}} (\varepsilon a_m-\varepsilon a_j)} \Bigg) 
		= \varepsilon^{-i} \Bigg(\dfrac{\sum\limits_{\substack{1 \leq m_1 < \dots < m_{n-i} \leq n \\ m_1, \dots, m_{n-i} \neq j}} (-1)^{j-1} a_{m_1}\dots a_{m_{n-i}}}{ a_ j\prod\limits_{\substack{1 \leq m \leq n \\ m \neq j}} ( a_m- a_j)} \Bigg).
\]
This means that the $i$th row of $[H(\Lambda, \varepsilon)]^{-1}$ has order $\varepsilon^{-i}$. Thus, from  \eqref{eq:Yi} and \eqref{eq:coefficients},  $c_i^{\pm}(\e\Lambda)=\e^{2k-1-i}  C_j^{\pm}(\Lambda, \varepsilon)$, where $C_j$ is a smooth function, and so
\begin{equation}\label{polinomioP}
	P^{\pm}_{\e\Lambda}(x) = \sum^{2k-2}_{j=1} \varepsilon^{2k-1-j} C_j^{\pm}(\Lambda, \varepsilon) x^j.
\end{equation}
Notice that the norm of the polynomials $P^{+}_{\e\Lambda}(x)$ and $P^{-}_{\e\Lambda}(x)$  go to zero as $\e\to 0$, as required.

\subsection{Monodromic tangential singularities}\label{sec:mts}
In what follows, we are going to show that, for $\Lambda\in\mathcal{L}$, the Filippov system $Z_{\e\Lambda}$, given by \eqref{sistemainicialperturbado}, has $k$ $(2,2)$-monodromic tangential singularities with non-vanishing second Lyapunov coefficient.

First, in the next result, we will see that, for $\e>0$ sufficiently small, the origin and the points $(0,\e a_i),$   $i\in\{1,\ldots,2k-2\},$ are tangencies of multiplicity $2$ between $\Sigma$ and the vector fields $Z_{\e\Lambda}^+$ and $Z_{\e\Lambda}^-$ of which $k$ of them are invisible. For the sake of simplicity, we will denote $a_0=0$.

\begin{proposition}\label{prop:multp2}
	Let $\e>0$, $a_0=0$, and $\Lambda=(a_1,\ldots,a_{2k-2}) \in \mathcal{L}$, with $\mathcal{L}$ given by \eqref{domainL}. Consider the vector fields $Z_{\e\Lambda}^+$ and $Z_{\e\Lambda}^-$ provided by \eqref{sistemainicialperturbado}. Then, for $\e>0$ sufficiently small, the points $(0,\e a_i),$   $i\in\{0,\ldots,2k-2\},$ are tangencies of multiplicity $2$ between $\Sigma$ and the vector fields $Z_{\e\Lambda}^+$ and $Z_{\e\Lambda}^-$. In addition, if $a_1<0 < a_2  < \dots < a_{2k-2}$,  these tangencies are invisible for $i\in\{1\}\cup\{2,4,\ldots,2k-2\}$ (see Figure \ref{fig1}).
\end{proposition}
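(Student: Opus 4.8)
The plan is to reduce the two tangency conditions to the vanishing of a single scalar function on $\Sigma$ and to control its roots through a rescaling in $\e$. Since {\bf C1} gives $X^{\pm}(0,0)\neq0$, on a neighborhood of the origin the contact condition $Y^{\pm}_{\e\Lambda}(x,0)=0$ is equivalent to the vanishing of
\[ G^{\pm}(x):=\frac{Y^{\pm}_{\e\Lambda}(x,0)}{X^{\pm}(x,0)}=\pm\delta\bigl(a^{\pm}x^{2k-1}+x^{2k}f^{\pm}(x)\bigr)+P^{\pm}_{\e\Lambda}(x), \]
where the second equality follows from the definitions \eqref{value:a}--\eqref{auxfunc} of $a^{\pm}$ and $f^{\pm}$ together with \eqref{YLambda}. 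By the interpolation conditions \eqref{relationshipYi0} imposed in the construction of $P^{\pm}_{\e\Lambda}$, and since $P^{\pm}_{\e\Lambda}(0)=0$, the numbers $\e a_i$ with $a_0=0,a_1,\dots,a_{2k-2}$ are zeros of $G^{\pm}$, that is $G^{\pm}(\e a_i)=0$ for $i\in\{0,\dots,2k-2\}$.

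First I would rescale $x=\e u$. Thanks to the explicit form \eqref{polinomioP} of $P^{\pm}_{\e\Lambda}$, a common factor $\e^{2k-1}$ factors out, giving $G^{\pm}(\e u)=\e^{2k-1}\widehat{G}^{\pm}(u,\e)$ with $\widehat{G}^{\pm}$ smooth in $(u,\e)$ and
\[ \widehat{G}^{\pm}(u,0)=\pm\delta\,a^{\pm}u^{2k-1}+\widehat{P}^{\pm}(u,0) \]
a polynomial of degree $2k-1$ in $u$. Because $G^{\pm}(\e a_i)=0$ for every small $\e>0$, continuity at $\e=0$ yields $\widehat{G}^{\pm}(a_i,0)=0$; as the $2k-1$ numbers $a_0,\dots,a_{2k-2}$ are pairwise distinct (this is exactly the content of $\Lambda\in\mathcal{L}$, see \eqref{domainL}) and match the degree, I obtain the factorization
\[ \widehat{G}^{\pm}(u,0)=\pm\delta\,a^{\pm}\prod_{i=0}^{2k-2}(u-a_i), \]
so all of these roots are simple.

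The multiplicity-two claim is then a persistence argument. A contact at $(\e a_i,0)$ has multiplicity two exactly when $\partial_x Y^{\pm}_{\e\Lambda}(\e a_i,0)\neq0$, which, since $Y^{\pm}_{\e\Lambda}(x,0)=X^{\pm}(x,0)G^{\pm}(x)$ and $G^{\pm}(\e a_i)=0$, is equivalent to $\tfrac{d}{dx}G^{\pm}(\e a_i)=\e^{2k-2}\,\partial_u\widehat{G}^{\pm}(a_i,\e)\neq0$. Simplicity of $a_i$ for $\widehat{G}^{\pm}(\cdot,0)$ gives $\partial_u\widehat{G}^{\pm}(a_i,0)\neq0$, and this persists for $\e$ small, so every one of the $2k-1$ contacts has multiplicity two for all sufficiently small $\e>0$. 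For the invisibility statement I would evaluate the {\bf C2}-type quantity $X^{\pm}(\e a_i,0)\,\partial_x Y^{\pm}_{\e\Lambda}(\e a_i,0)=|X^{\pm}(\e a_i,0)|^2\,\tfrac{d}{dx}G^{\pm}(\e a_i)$, whose sign agrees with that of $\partial_u\widehat{G}^{\pm}(a_i,0)=\pm\delta\,a^{\pm}\Pi_i$, where $\Pi_i:=\prod_{j\neq i}(a_i-a_j)$. Since {\bf C2} and \eqref{value:a} force $\sgn(a^{+})=\sgn(a^{-})=-\delta$, the invisibility inequalities for $Z^{+}$ (sign $<0$) and for $Z^{-}$ (sign $>0$) both collapse to the single condition $\Pi_i>0$.

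It remains the combinatorial count. Ordering the roots as $a_1<0=a_0<a_2<\cdots<a_{2k-2}$, the sign of $\Pi_i$ equals $(-1)^{N_i}$, where $N_i$ is the number of roots exceeding $a_i$; a direct inspection gives $N_1=2k-2$ and $N_i=2k-2-i$ for $i\geq2$, whence $\Pi_i>0$ precisely for $i\in\{1\}\cup\{2,4,\dots,2k-2\}$, which are $k$ of the $2k-1$ contacts. I expect the main obstacle to be the rescaling step: guaranteeing that the factor $\e^{2k-1}$ is extracted cleanly, so that $\widehat{G}^{\pm}(\cdot,0)$ is a genuine degree-$(2k-1)$ polynomial whose roots are exactly the prescribed $a_i$ and whose simplicity persists for small $\e$. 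The sign bookkeeping behind the invisibility count --- where the factor $\pm\delta$ and the common sign of $a^{\pm}$ conspire to make the two invisibility conditions coincide --- is delicate but routine once the parity argument is in place.
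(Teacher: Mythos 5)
Your proof is correct and rests on the same two pillars as the paper's argument: a degree-$(2k-1)$ polynomial forced to vanish at the $2k-1$ pairwise distinct points $a_0=0,a_1,\dots,a_{2k-2}$ (hence having only simple roots), followed by a sign-alternation count at those roots. The route to that polynomial is genuinely different, though. The paper applies the Malgrange Preparation Theorem to $h^{\pm}(x,\e)=Y^{\pm}_{\e\Lambda}(x,0)$ to extract a Weierstrass-type polynomial factor $B^{\pm}_{\e}(x)$ of degree $2k-1$ in $x$; you instead divide by $X^{\pm}(x,0)$ and exploit the explicit $\e$-grading of $P^{\pm}_{\e\Lambda}$ in \eqref{polinomioP} to pull out the factor $\e^{2k-1}$ after the rescaling $x=\e u$. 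Your limit polynomial $\widehat G^{\pm}(\cdot,0)$ is precisely $T^{\pm}_{\Lambda}$ from \eqref{eq:T} of Appendix B, so your construction is more elementary and more explicit, at the mild cost of an extra (but routine) persistence argument for the simple roots as $\e\to0^{+}$. Your invisibility bookkeeping also differs in a useful way: by testing the sign of the {\bf C2}-type quantity $X^{\pm}(\e a_i,0)\,\partial_x Y^{\pm}_{\e\Lambda}(\e a_i,0)$ rather than of $\partial_x Y^{\pm}_{\e\Lambda}$ alone, both invisibility inequalities collapse to the single $\delta$-independent condition $\Pi_i>0$, and your parity count of the roots exceeding $a_i$ is equivalent to the paper's ``alternating signs of the derivative at consecutive roots.'' Note that your expression for $G^{\pm}$ is the one consistent with \eqref{auxfunc} (i.e., with $X^{\pm}$, not $|X^{\pm}|$, in the denominator of \eqref{quociente}), and that your formula $N_i=2k-2-i$ is correctly restricted to $i\ge 2$, which suffices since the claim concerns only $i\in\{1\}\cup\{2,4,\dots,2k-2\}$.
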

\begin{proof}
First of all, notice that the construction of the polynomials $P_{\Lambda}^{\pm}$ implies that the points $(\e a_i,0)$, for $i\in\{0,\ldots,2k-2\}$, are tangencies between $\Sigma$ and the vector fields $Z_{\e\Lambda}^{+}$ and $Z_{\e\Lambda}^{-}$. Indeed, the relationship \eqref{relationshipYi0} implies that $Y_{\e\Lambda}^{\pm}(\e a_i,0)=0$ and, in addition, $X^{\pm}_{\e\Lambda}(\varepsilon a_i, 0) =X^{\pm}(0, 0)+\CO(\e)$ which, by condition {\bf C1}, is non-vanishing for $\e>0$ sufficiently small.

Now, in order to see that these tangencies have multiplicity $2$, consider the function $h^{\pm}(x,\e)=Y^{\pm}_{\e\Lambda} (x,0)$. Notice that,  condition {\bf C1} implies that
\[
h^{\pm}(0,0)=0,\, \dfrac{\partial^i h^{\pm}}{\partial x^i}(0,0)=0\,, \text{for} \, i\in\{1,\ldots,2k-2\},\, \text{and}\, \dfrac{\partial^{2k-1} h^{\pm}}{\partial x^{2k-1}}(0,0)\neq0.
\]
Thus, by Malgrange Preparation Theorem (see, for instance, \cite{malgrange1971}), there exists a neighborhood $V$ of $(x,\e)=(0,0)$ such that $h^{\pm}|_V (x,\e)=A^{\pm}(x,\e)B_{\e}^{\pm}(x)$ where, $A(x,\e)>0$ for $(x,\e)\in V$ and
\[
B_{\e}^{\pm}(x)=\pm\delta a^{\pm} x^{2k-1}+\ell_{2k-2}^{\pm}(\e) x^{2k-2}+\cdots+\ell_1^{\pm}(\e) x+\ell_0^{\pm}(\e).
\]
Notice that $Y_{\e\Lambda}^{\pm}(\e a_i,0)=0$ implies that $B_{\e}^{\pm}(\e a_i)=0$ for all $i\in\{0,\ldots,2k-2\}$. Since $B_{\e}^{\pm}$ is a polynomial in $x$ of degree $2k-1$, we conclude that those roots are simple provided that $a_i\neq a_j$, for all $i\neq j$ in $\{0,\ldots,2k-2\}$, which is ensured by the fact that  $\Lambda=(a_1,\ldots,a_{2k-2})\in\mathcal{L}$ (see \eqref{domainL}). Hence,
\begin{equation}\label{derY}
\dfrac{\partial   Y_{\e\Lambda}^{\pm}}{\partial x}(\varepsilon a_i ,0)=A^{\pm}(\e a_i,\e)(B_{\e}^{\pm})'(\e a_i)\neq0,
\end{equation}
implying that the tangencies $(\e a_i,0)$, for  $i\in\{0,\ldots,2k-2\}$, have multiplicity $2$.

Finally, since $B_{\e}^{\pm}$ has odd degree, its derivative at its smallest root has the same sign as the leading coefficient $\pm\delta a^{\pm}$. Thus, by assuming that $a_1 < 0 < a_2 < a_3 < \dots < a_{2k-2}$ and taking into account that $\sgn(\pm\delta a^{\pm})=\mp 1$ (see \eqref{value:a} and {\bf C2}), it follows that $\sgn((B_{\e}^{\pm})'(\e a_1))=\mp 1$ and, since the derivative at the roots has alternate signs, $\sgn((B_{\e}^{+})'(\e a_{2j}))=\mp1$, for $j=1,\ldots k-1$. Taking \eqref{derY} into account, we conclude that the tangencies $(\e a_i,0)$, for $i\in\{1\}\cup\{2,4,\ldots,2k-1\}$, are invisible  for both vector fields $Z^+_{\e\Lambda}$ and $Z^-_{\e\Lambda}$.The remaining tangencies are visible (see Figure \ref{fig1}).
\end{proof}

\begin{figure}[H]
	\bigskip
	\centering
	\begin{overpic}[scale=0.6]{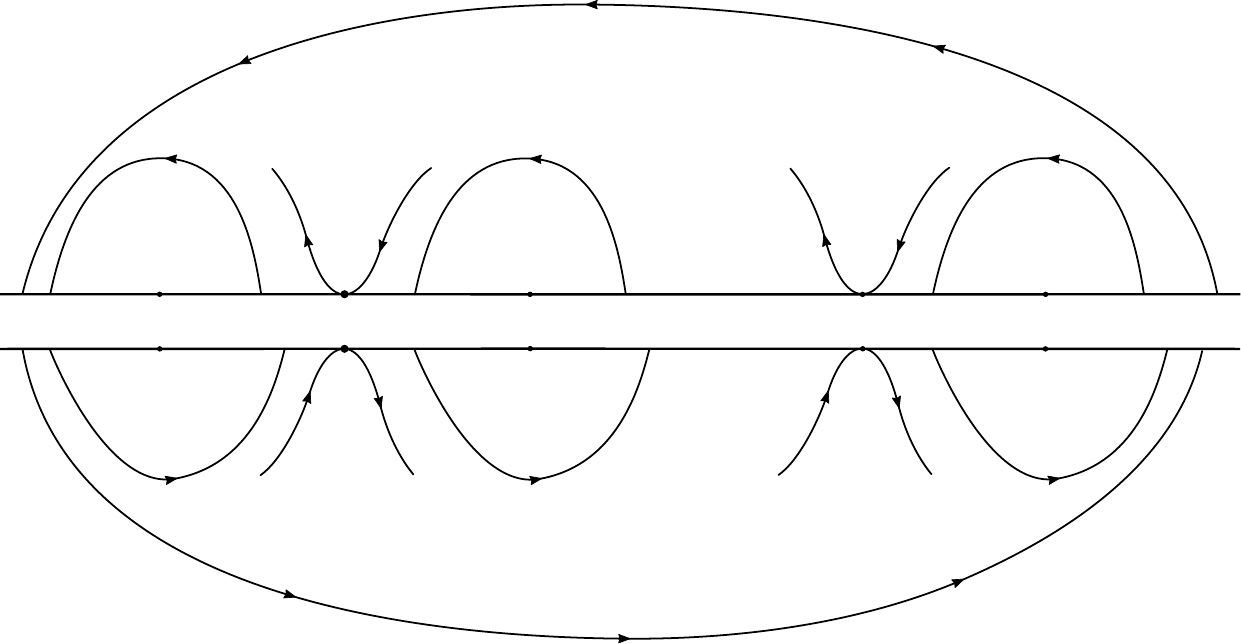}
		\put(90,43){$Z_{\e\Lambda}^+$}
		\put(90,8){$Z_{\e\Lambda}^-$}
		\put(11,25){$\e a_1$}
		\put(27,25){$0$}
		\put(41,25){$\e a_2$}
		\put(56,25){$\dots$}
		\put(65,25){$\e a_{2k-3}$}
		\put(80,25){$\e a_{2k-2}$}
	\end{overpic}
	\vspace{0.8cm}
	\caption{Illustration of the tangencies of multiplicity $2$ between $\Sigma$ and the vector fields $Z_{\e\Lambda}^{+}$ and  $Z_{\e\Lambda}^{-}$ for $\Lambda=(a_1,\ldots,a_{2k-2})\in\mathcal{L}$ with the configuration $a_1 < 0 < a_2 < a_3 < \dots < a_{2k-2}$. They alternate between invisible and visible tangencies.}
	\label{fig1}
\end{figure}

Proposition \ref{prop:multp2} implies that, for $\Lambda\in\mathcal{L}$ and $\e>0$ sufficiently small, the points $(0,\e a_i)$, for  $i\in\{1\}\cup\{2,4,\ldots,2k-1\}$, are $(2,2)$-monodromic tangential singularities of the Filippov system  $Z_{\e\Lambda}$ given by \eqref{sistemainicialperturbado}. In what follows, we are going to compute the second Lyapunov coefficient for each one of these singularities.

\begin{proposition}\label{prop:22mts}
Let $\e>0$ and $\Lambda=(a_1,\ldots,a_{2k-2})\in\mathcal{L},$ with $\mathcal{L}$ given by \eqref{domainL}, satisfying $a_1 < 0 < a_2 < a_3 < \dots < a_{2k-2}$. Consider the Filippov system $Z_{\e\Lambda}$ provided in \eqref{sistemainicialperturbado}. Then, for each $i\in\{1\}\cup\{2,4,\ldots,2k-2\}$ and $\e>0$ sufficiently small, the second Lyapunov coefficient $V_{2,i}(\e)$ associated with the $(2,2)$-monodromic tangential singularity $(\e a_i,0)$ satisfies
\begin{equation}\label{V2i}
V_{2,i}(\e)=\dfrac{2k+1}{3}V_2+\CO(\e).
\end{equation}
\end{proposition}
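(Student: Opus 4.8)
The plan is to localize at each singularity $(\e a_i,0)$, read off the three local invariants that enter the second Lyapunov coefficient through formula \eqref{eq:coefV2NP} (now with local multiplicity $2$, so the denominator factor is $3$ instead of $2k+1$), and then expand everything in $\e$. First I would record that, exactly as in \eqref{quociente} together with \eqref{YLambda} and the fact that $X^{\pm}(x,0)/|X^{\pm}(x,0)|=\pm\delta$ near $x=0$, the perturbed field satisfies
\[
\frac{Y_{\e\Lambda}^{\pm}(x,0)}{|X^{\pm}(x,0)|}=\pm\delta\, Q^{\pm}(x),\qquad Q^{\pm}(x):=a^{\pm}x^{2k-1}+x^{2k}f^{\pm}(x)+P^{\pm}_{\e\Lambda}(x).
\]
Since the local orientation $\sgn X^{\pm}(\e a_i,0)$ coincides with $\delta$ for $\e$ small, writing $x=\e a_i+u$ and using $Q^{\pm}(\e a_i)=0$ (the contact condition \eqref{relationshipYi0}), the local analogue of \eqref{quociente} for the multiplicity-$2$ contact at $(\e a_i,0)$ identifies the first two local invariants as
\[
\tilde a_i^{\pm}=(Q^{\pm})'(\e a_i),\qquad \tilde f_{0,i}^{\pm}=\tfrac12 (Q^{\pm})''(\e a_i).
\]
The decisive observation for the third invariant is that $P^{\pm}_{\e\Lambda}$ enters $Y^{\pm}_{\e\Lambda}$ only through the factor $X^{\pm}(x,y)P^{\pm}_{\e\Lambda}(x)$, and this contribution cancels identically in the numerator defining $g^{\pm}$ in \eqref{auxfunc}; hence the local $g$-invariant at $(\e a_i,0)$ is simply $g^{\pm}(\e a_i,0)$, so $\tilde g^{\pm}_{0,0,i}=g^{\pm}_{0,0}+\CO(\e)$.

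Next I would rescale. Setting $x=\e\xi$ and using \eqref{polinomioP}, one checks that $Q^{\pm}(\e\xi)=\e^{2k-1}\mathcal{Q}^{\pm}(\xi,\e)$ for an analytic $\mathcal{Q}^{\pm}$, and the contact conditions become the identities $\mathcal{Q}^{\pm}(a_j,\e)\equiv 0$ for $j=0,\dots,2k-2$ (with $a_0=0$). Differentiating these in $\e$ and matching degrees, I would extract the first two terms of the $\e$-expansion,
\[
\mathcal{Q}^{\pm}(\xi,0)=a^{\pm}M(\xi),\qquad \partial_\e \mathcal{Q}^{\pm}(\xi,0)=f_0^{\pm}\,N(\xi),
\]
where $M(\xi)=\prod_{j=0}^{2k-2}(\xi-a_j)$ and $N$ is a fixed polynomial, \emph{independent of the sign} $\pm$, vanishing at every $a_j$. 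The structural feature I want is precisely this: the $\e^0$-part of $\mathcal{Q}^{\pm}$ is $a^{\pm}$ times a sign-independent function, and its $\e^1$-part is $f_0^{\pm}$ times a sign-independent function, with both $M$ and $N$ vanishing at $\xi=a_i$ and $M'(a_i)=\rho_i:=\prod_{j\neq i}(a_i-a_j)\neq0$.

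Then I would assemble $V_{2,i}(\e)$. From the rescaling, $\tilde a_i^{\pm}=\e^{2k-2}\partial_\xi\mathcal{Q}^{\pm}(a_i,\e)$ and $\tilde f_{0,i}^{\pm}=\tfrac12\e^{2k-3}\partial_\xi^2\mathcal{Q}^{\pm}(a_i,\e)$, so
\[
\frac{\tilde f_{0,i}^{\pm}}{\tilde a_i^{\pm}}=\frac{1}{2\e}\,\frac{\partial_\xi^2\mathcal{Q}^{\pm}(a_i,\e)}{\partial_\xi\mathcal{Q}^{\pm}(a_i,\e)},
\]
which at first sight diverges like $1/\e$. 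Using the two-term expansion above together with $M(a_i)=N(a_i)=0$, the ratio on the right equals $M''(a_i)/\rho_i+2\e\,f_0^{\pm}/a^{\pm}+\CO(\e^2)$. Here lies the crux: the $\e^0$ term $M''(a_i)/\rho_i$ does not depend on the sign $\pm$, so it cancels in the difference $\tilde\alpha_{2,i}^{+}-\tilde\alpha_{2,i}^{-}$ defining $V_{2,i}(\e)$ through \eqref{eq:coefV2NP}, killing the spurious $1/\e$ divergence and leaving
\[
\frac{\tilde f_{0,i}^{+}}{\tilde a_i^{+}}-\frac{\tilde f_{0,i}^{-}}{\tilde a_i^{-}}=\frac{f_0^{+}}{a^{+}}-\frac{f_0^{-}}{a^{-}}+\CO(\e).
\]
Substituting this and $\tilde g^{\pm}_{0,0,i}=g^{\pm}_{0,0}+\CO(\e)$ into \eqref{eq:coefV2NP} with denominator factor $3$ yields $V_{2,i}(\e)=\tfrac13 W+\CO(\e)$, where $W=\delta\left[-2\left(\tfrac{f_0^{+}}{a^{+}}-\tfrac{f_0^{-}}{a^{-}}\right)+2\delta\left(g_{0,0}^{+}+g_{0,0}^{-}\right)\right]$; comparing with \eqref{eq:coefV2NP} for the original $(2k,2k)$-singularity shows $W=(2k+1)V_2$, which is exactly \eqref{V2i}.

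The main obstacle is precisely this cancellation of the $\CO(1/\e)$ term: it is invisible at leading order and forces one to control $\mathcal{Q}^{\pm}$ to first order in $\e$, the point being that both the constant and the linear $\e$-coefficients factor as (sign-dependent scalar)$\times$(sign-independent polynomial vanishing at $a_i$). Verifying that $N$ is genuinely independent of $\pm$ and that $M''(a_i)/\rho_i$ is as well — so that the divergence really cancels in the difference rather than merely being finite term by term — is the step I would treat most carefully; the remaining computations are routine Taylor and Vandermonde bookkeeping already set up in Subsection \ref{sec:P_Lambda}.
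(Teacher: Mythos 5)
Your proposal is correct and follows essentially the same route as the paper: localize at $(\e a_i,0)$, apply \eqref{eq:coefV2NP} with denominator $3$, observe that $P^{\pm}_{\e\Lambda}$ drops out of the $g$-invariant, and control the apparent $\CO(1/\e)$ divergence in $\tilde f^{\pm}_{0,i}/\tilde a^{\pm}_i$ by expanding the rescaled polynomial to first order in $\e$. Your factorizations $\mathcal{Q}^{\pm}(\xi,0)=a^{\pm}M(\xi)$ and $\partial_\e\mathcal{Q}^{\pm}(\xi,0)=f_0^{\pm}(\xi-\alpha)M(\xi)$ with $\alpha=-\sum_j a_j$ are precisely the content of the paper's Lemma \ref{lemma1} (the polynomials $T^{\pm}_{\Lambda}$ and $U^{\pm}_{\Lambda}$ of Appendix B), repackaged more transparently than the paper's coefficient sums $s_1^{\pm},\ldots,s_4^{\pm}$, and the sign-independence of $M''(a_i)/M'(a_i)$ is exactly the mechanism by which the paper cancels the $\e^{-1}$ terms of $\alpha^{+}_{2,i}-\alpha^{-}_{2,i}$.
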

\begin{proof}
We start by fixing an index $i\in\{1\}\cup\{2,4,\ldots,2k-2\}$ and translating the point $(\e a_i,0)$ to the origin by means of the change of coordinates $u=x-\varepsilon a_i$.  Thus, the vector field $Z_{\e\Lambda}$, given by \eqref{sistemainicialperturbado}, becomes 
\begin{equation}\label{Ze-proof}
	\widetilde Z_{\e}(u,y)= \begin{cases}
		 \left(\begin{array}{c} \widetilde{X}_{\e}^+(u,y),\\ \widetilde{Y}_{\e}^+(u,y)\end{array}\right), & y > 0,\vspace{0.3cm} \\
		 \left(\begin{array}{c} \widetilde{X}_{\e}^-(u,y),\\ \widetilde{Y}_{\e}^-(u,y)\end{array}\right),& y < 0,
	\end{cases}
\end{equation}
where
\[
\widetilde{X}_{\e}^{\pm}(u,y)=X^{\pm}(u+\varepsilon a_i,y)\,\,\, \text{and} \,\,\,
\widetilde{Y}_{\e}^{\pm}(u,y)=Y^{\pm}(u+\varepsilon a_i,y)+X^{\pm}(u+\varepsilon a_i,y)P^{\pm}_{\e\Lambda}(u+\varepsilon a_i).
\]
Hence, the second Lyapunov coefficient of the origin of \eqref{Ze-proof} is given by  \eqref{eq:coefV2NP} as
\begin{equation}\label{eq:coefV2P}
V_{2,i}(\e)=\delta(\alpha_{2,i}^+(\e)-\alpha_{2,i}^-(\e)), \,\,\, \alpha_{2,i}^{\pm}(\e)=\dfrac{-2\tilde f^{\pm}_{0,\e}\pm 2 \delta \tilde a^{\pm}_{\e} \tilde g^{\pm}_{0,0,\e}}{3 \tilde a^{\pm}_{\e}},
\end{equation}
where $\delta=\sgn(X^+_{\e}(0,0))=\sgn(X^+(0,0)),$ for $\e>0$ sufficiently small,
\[
\tilde a^{\pm}_{\e}=\dfrac{1}{|\widetilde{X}^{\pm}_{\e}(0,0)|}\dfrac{\partial \widetilde{Y}^{\pm}_{\e}}{\partial u}(0,0),\,\,\, \tilde f^{\pm}_{0,\e}=\tilde f^{\pm}_{\e}(0),\,\,\,\text{and}\,\,\, \tilde g^{\pm}_{0,0,\e}=\tilde g^{\pm}_{\e} (0,0),
\]
with
\[
\begin{aligned}
		\tilde f^{\pm}_{\e}(u)=&\dfrac{\pm \delta \widetilde{Y}^{\pm}_{\e}(u,0)-\tilde a^{\pm}_{\e}u \widetilde{X}^{\pm}_{\e}(u,0)}{u^{2} \widetilde{X}^{\pm}_{\e}(u,0)}\,\,\, \text{and}\vspace{0.2cm}\\
		\tilde{g}^{\pm}_{\e}(u,y)=&\dfrac{\pm \widetilde{X}^{\pm}_{\e}(u,0)\widetilde{Y}^{\pm}_{\e}(u,y) \mp \widetilde{X}^{\pm}_{\e}(u,y)\widetilde{Y}^{\pm}_{\e}(u,0)}{y \delta \widetilde{X}^{\pm}_{\e}(u,y)\widetilde X^{\pm}_{\e}(u,0)}.
	\end{aligned}
\]
Using that (see \eqref{quociente})
\[
Y^{\pm}(x,y)= \pm\delta X^{\pm}(x,y)\left(a^{\pm}x^{2k-1} + x^{2k}f^{\pm}(x) + yg^{\pm}(x,y)\right),\,\,\, (x,y)\in U,
\]
 we compute
\begin{equation}\label{eq:tildefg}
	\begin{array}{rl}
		\tilde a^{\pm}_{\e} = & \pm \delta (P^{\pm}_{\e\Lambda})'(\varepsilon a_i)+\varepsilon^{2 k-2}(2 k  -1)a^{\pm}a_i^{2 k-2}+2 \varepsilon^{2 k-1} k a_i^{2 k-1}  f^{\pm}(\varepsilon a_i  )\vspace{0.2cm}\\
		&+ \varepsilon^{2 k} a_i^{2 k}(f^{\pm})'(a_i \varepsilon ),\vspace{0.2cm}\\
		\tilde f^{\pm}_{0,\e}= & \dfrac{\pm\delta (P^{\pm}_{\e\Lambda})''(\varepsilon a_i)}{2}  +\e^{2 k-3} a^{\pm} (k-1) (2 k-1) a_i^{2 k-3}+\e^{2 k-2} k (2 k-1) a_i^{2 k-2} f^{\pm}(a_i \e  ) \vspace{0.2cm}\\
		&+2\e^{2 k-1}  k a_i^{2 k-1} (f^{\pm})'(a_i \e  ) +\e^{2 k} \dfrac{a_i^{2 k} (f^{\pm})''(a_i \e  )}{2},\vspace{0.3cm}\\
		\tilde{g}^{\pm}_{0,0,\e} = &g^{\pm}(\varepsilon a_i,0).
	\end{array}
\end{equation}

From \eqref{polinomioP}, we have that
\[
(P_{\e\Lambda}^{\pm})'(\varepsilon a_i)  = \sum_{j=1}^{2k-2}j a_i^{j-1} \varepsilon^{2k-2}C^{\pm}_j(\Lambda, \varepsilon)\,\,\,\text{and}\,\,\,  	(P_{\e\Lambda}^{\pm})''(\varepsilon a_i)  = \sum_{j=2}^{2k-2}j(j-1)a_i^{j-2} \varepsilon^{2k-3}C^{\pm}_j(\Lambda, \varepsilon).
\]
Thus, by denoting
\begin{equation}\label{defsums}
	\begin{aligned}
		s_1^{\pm} & = \sum_{j=1}^{2k-2}ja_i^{j-1}  C_j^{\pm}(\Lambda,0), \\ 
		s_2^{\pm} & = \sum_{j=1}^{2k-2}ja_i^{j-1}  \dfrac{\partial C_j^{\pm}}{\partial \e}(\Lambda, 0), \\ 
		s_3^{\pm} & = \sum_{j=2}^{2k-2}j(j-1)a_i^{j-2}  C_j^{\pm}( \Lambda,0), \\ 
		s_4^{\pm} & = \sum_{j=2}^{2k-2} j(j-1)a_i^{j-2}  \dfrac{\partial C_j^{\pm}}{\partial \e}( \Lambda,0),
	\end{aligned}
\end{equation}
we get that
\begin{equation}\label{eq:P10epsilonaux}
	P_{\Lambda}'(\varepsilon a_i)  = \varepsilon^{2k-2} s^{\pm}_1+ \varepsilon^{2k-1} s^{\pm}_2 + O(\varepsilon^{2k})  \,\,\,\text{and}\,\,\, 	P_{\Lambda}^{''}(\varepsilon a_i)  = \varepsilon^{2k-3} s^{\pm}_3+  \varepsilon^{2k-2} s^{\pm}_4 + O(\varepsilon^{2k-1}).
\end{equation}
 
Substituting \eqref{eq:P10epsilonaux} into \eqref{eq:tildefg} we obtain
 \begin{equation}\label{eqaux:tildefg}
	\begin{aligned}
		\tilde a^{\pm}_{\e} = &  \varepsilon^{2 k-2}\Big((2 k  -1)a^{\pm}a_i^{2 k-2} \pm \delta s^{\pm}_1  \Big)\\&+ \varepsilon^{2 k-1}\Big(2 k a_i^{2 k-1}  f^{\pm}(\varepsilon a_i  ) \pm \delta s^{\pm}_2\Big)+\CO(\e^{2k}),\vspace{0.3cm}\\
		\tilde f^{\pm}_{0,\e}= &  \e^{2 k-3} \left(a^{\pm} (k-1) (2 k-1) a_i^{2 k-3}\pm\dfrac{  \delta s^{\pm}_3}{2}\right)\\&+\e^{2 k-2}\left( k (2 k-1) a_i^{2 k-2} f^{\pm}(a_i \e  ) \pm\dfrac{\delta s^{\pm}_4}{2} \right)+\CO(\e^{2k-1}),\vspace{0.3cm}\\
		\tilde{g}^{\pm}_{0,0,\e} = &g^{\pm}(\varepsilon a_i,0).
	\end{aligned}
 \end{equation}
 
From here, in order to compute $V_{2,i}(\e)$, we proceed with the following steps. Firstly, we substitute the expressions in \eqref{eqaux:tildefg} into the formula \eqref{eq:coefV2P} to get
\[
\begin{aligned}
\alpha^{\pm}_{2,i}(\e)=&\dfrac{\mp\delta a_i^3s_3^{\pm}-(2k-2)(2k-1)a^{\pm}a_i^{2k}+\e \Big((2 g^{\pm}_{0,0}s^{\pm}_1 \mp\delta s^{\pm}_4)a_i^3 - (4 k-2) (f^{\pm}_{0} k\mp\delta a^{\pm}  g^{\pm}_{0,0})a_i^{2 k+1} \Big)+\CO(\e^2)}
{\e  \left(3(2 k- 1)a^{\pm}a_i^{2 k+1}\pm3 \delta a_i^3  s^{\pm}_1\right)+\e ^2 \left(6 k f^{\pm}_{0} a_i^{2 k+2}\pm 3 \delta a_i^3  s^{\pm}_2\right)+\CO(\e^3)}\\
=&\e^{-1}\frac{\mp \delta a_i^3  s^{\pm}_3-(2k-2) (2 k-1) a^{\pm} a_i^{2 k}}{3 a^{\pm} (2 k-1) a_i^{2 k+1}\pm 3 \delta a_i^3  s^{\pm}_1}+A^{\pm}_i+\CO(\e).
\end{aligned}
\]
The expression of $A^{\pm}_i$ is a little cumbersome, so we shall omit it here. Secondly, taking into account the  definition of the coefficients of the polynomials $P^{\pm}_{\Lambda}$ in \eqref{eq:coefficients} and expression \eqref{eq:Yi}, we get the following relationships  
\[
			s_1^{-}  = - \dfrac{a^{-}}{a^{+}} s_1^+,  \,\, s_2^{-}  = - \dfrac{f^{-}_0}{f^{+}_0} s_2^+,  \,\,s_3^{-}  = - \dfrac{a^{-}}{a^{+}} s_3^+, \text{and}  \,\,s_4^{-}  =  -\dfrac{f^{-}_0}{f^{+}_0} s_4^+,
\]
which implies that the coefficients of $\e^{-1}$ in the expansions above for $\alpha^{+}_{2,i}(\e)$ and $\alpha^{-}_{2,i}(\e)$  coincide. Thus, from \eqref{eq:coefV2P}, we have that 
\[
V_{2,i}(\e)=\delta(A_i^+-A_i^-)+\CO(\e),
\]
implying that $V_{2,i}(\e)$ is continuous at $\e=0$. Moreover, the relationships above allow to get rid of the terms $s_1^-$, $s_2^-$, $s_3^-$, and $s_4^-$ in $A_i^-$.
Finally, by using the relationships 
\begin{equation}\label{proofrel}
		\begin{aligned}
			s_2^{+} & =  \dfrac{f^{+}_0}{a^{+}} \left[ (a_i-\alpha)s_1^{+} - \delta a^{+} a_i^{2k-1} -\delta (2k-1) a^{+} \alpha a_i^{2k-2} \right] \,\,\, \text{and}\\ 
			s_4^{+} & =  \dfrac{f^{+}_0}{a^{+}} \left[ (a_i-\alpha)s_3^{+}+ 2 s_1^{+} -\delta (2k-2)(2k-1) a^{+} \alpha a_i^{2k-3} \right],
			\end{aligned}
		\end{equation}
provided by Lemma \ref{lemma1} of Appendix B, and taking into account that $\delta^{2n}=1$ and $\delta^{2n+1}=\delta$ for any integer $n$, we obtain
\[
V_{2,i}(\e)=\dfrac{2}{3}\left( \dfrac{a^+ g^+_{0,0}-\delta f^+_0}{a^+} +\dfrac{a^- g^-_{0,0}+\delta f^-_0}{a^-}\right)+\CO(\e)=\dfrac{2k+1}{3}V_2+\CO(\e).
\]
It concludes this proof.
\end{proof}

\subsection{Proof of Theorem \ref{thm:unfold}}\label{sec:proofA}

Consider the Filippov system  $Z_{\e\Lambda}$, given by \eqref{sistemainicialperturbado}, and let $U\subset\R^2$ be a neighborhood of  $(0,0)$ and $\lambda>0$.

From propositions \ref{prop:multp2} and \ref{prop:22mts}, we can fix $\Lambda=(a_1,\ldots,a_{2k-2})\in\mathcal{L}$, with $a_1<0<a_2<\ldots a_{2k-2}$, such that, for $\e>0$ sufficiently small, the Filippov system $Z_{\e\Lambda}$ has $k$ $(2,2)$-monodromic tangential singularities, namely, the points $(\e a_i,0)$ for $i\in\{1\}\cup\{2,4,\ldots,2k-2\}$. Moreover, the second Lyapunov coefficients of each one of these singularities writes like \eqref{V2i} and, therefore, for $\e>0$ sufficiently small, they are all non-vanishing and have the same sign as $V_2$. Thus, we fix $\e^*>0$ such that the norms of the polynomials $P^+_{\e^* \Lambda}$ and $P^-_{\e^* \Lambda}$ are smaller than $\lambda$, $(\e^* a_i,0)\in U$ and $\sgn(V_{2,i}(\e^*))=\sgn(V_2)$ for $i\in\{1\}\cup\{2,4,\ldots,2k-2\}$.

Now, denote $P^{+}(x)=P^+_{\e^* \Lambda}$ and $P^{-}(x)=P^-_{\e^* \Lambda}$ and consider the one-parameter family of Filippov systems $\widetilde Z_b(x,y)$, given by \eqref{teo:sistemainicialperturbado}.
For each $i\in\{1\}\cup\{2,4,\ldots,2k-2\}$, consider the translated Filippov system $Z_b^i(u,x)=\widetilde Z_b(u+\e^* a_i,y)$. We notice that $Z_b^i$ writes like \eqref{Zb} and satisfies all the hypotheses of Proposition \ref{prop:phd1}. Thus, given a neighborhood $U_i\subset\R^2$  of the origin  satisfying $U_i+(\e^*a_i,0)\subset U$, there exists an interval $I_i\subset \R$ containing $0$ such that, for all $b\in I_i$ satisfying $\sgn(b)=-\sgn(\delta V_2)$, the Filippov system $Z_b^i$ has a hyperbolic limit cycle inside $U$ surrounding a sliding segment. In addition, the hyperbolic limit cycle is stable (resp. unstable) provided that $V_{2}<0$ (resp. $V_{2}>0$). Since the neighborhoods $U_i$, $i\in\{1\}\cup\{2,4,\ldots,2k-2\}$, can be arbitrarily chosen, we can impose that $(U_i+\e^* a_i)\cap (U_j+\e^* a_j)=\emptyset$ for $i\neq j$ in $\{1\}\cup\{2,4,\ldots,2k-2\}$.

Hence, by taking  $I=I_1\cap (I_2\cap I_4\cap\cdots \cap I_{2k-2})$, which is an interval containing the origin, we conclude that for each $i\in\{1\}\cup\{2,4,\ldots,2k-2\}$ and for every $b\in I$ satisfying $\sgn(b)=-\sgn(\delta V_2)$, the Filippov vector $\widetilde Z_b(x,y)$ has a limit cycle contained in $U_i+(\e^*a_i,0)\subset U$ surrounding an sliding segment.

\section*{Appendix A: Pseudo-Hopf bifurcation for monodromic tangential singularities}

In this appendix, we prove a degenerate version of the Pseudo-Hopf bifurcation for $(2k^+,2k^-)$-monodromic tangential singularities, which is characterized by the birth of a hyperbolic limit cycle when a sliding segment is created provided that there exists a first non-vanishing Lyapunov coefficient $V_{2\ell}$ (for a recursive formula for all the Lyapunov coefficients, see \cite[Theorem C]{novaessilva2020}).  Such a bifurcation phenomenon is briefly commented in \cite[Remark 1]{novaessilva2020}.  However, we have noticed that in the literature a proof for this result is only given for $k^+=k^-=\ell=1$, which is precisely the statement of Proposition \ref{prop:phd1}.

\begin{proposition} \label{prop:pseudohopfgeral}
Assume that the Filippov system $Z$, given by  \eqref{sistemainicial}, has a $(2k^+,2k^-)$-monodromic tangential singularity at the origin. Let $V_{2 \ell}$ be its first non-vanishing Lyapunov coefficient. 
Consider the 1-parameter family of Filippov systems
\[
Z_{ b}(x,y)=\begin{cases}
Z^+(x- b,y),&y>0,\\
Z^-
(x,y),&y<0.
 \end{cases}
\]
Then, given a neighborhood $U\subset \R^2$ of $(0,0)$, there exists a neighborhood $I\subset\R$ of $0$ such that the following statements hold for every $ b\in I$:
\begin{itemize}
\item If $\sgn( b)=-\sgn(\delta V_{2 \ell})$, then $Z_{ b}$ has a hyperbolic limit cycle in $U$ surrounding a sliding segment. In addition, the hyperbolic limit cycle is stable (resp. unstable) provided that $V_{2 \ell}<0$ (resp. $V_{2 \ell}>0$).
\item If $\sgn( b)=\sgn(\delta V_{2 \ell})$, then $Z_{ b}$ does not have limit cycles in $U$.
 \end{itemize}
 \end{proposition}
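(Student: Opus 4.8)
The plan is to reduce the statement to the analysis of a displacement function built from the perturbed half-return maps. Since $\widetilde Z_b$ translates only $Z^+$ by $(b,0)$, its upper half-return map is the conjugate of $\varphi^+$ by that translation, namely $\varphi_b^+(x)=\varphi^+(x-b)+b$, while the lower one is unchanged, $\varphi_b^-(x)=\varphi^-(x)$. The two invisible folds are thus located at $x=b$ (for $Z^+$) and $x=0$ (for $Z^-$), and between them a sliding segment is created. I would define the perturbed displacement $\Delta_b(x)=\delta\big(\varphi_b^+(x)-\varphi^-(x)\big)$ and observe, exactly as in the unperturbed case, that a crossing periodic orbit surrounding the sliding segment corresponds to a zero $x_b$ of $\Delta_b$ in the crossing region (with $\varphi_b^+(x_b)=\varphi^-(x_b)$ on the opposite side of the segment, so that the $\varphi^-$-involution closes the orbit), the orbit being hyperbolic precisely when the zero is simple.

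Next I would extract the leading-order structure of $\Delta_b$. Writing $\varphi^+(x-b)=\varphi^-(x-b)+\delta\Delta(x-b)$ and using that $\varphi^-$ is an analytic involution with $\varphi^-(0)=0$, hence $(\varphi^-)'(0)=-1$, one gets $\varphi^-(x-b)-\varphi^-(x)+b=2b+\mathcal{O}(bx,b^2)$, so that $\Delta_b(x)=2\delta b+\Delta(x-b)+\mathcal{O}(bx,b^2)$. By \cite[Theorem B]{novaessilva2020} the first non-vanishing Lyapunov coefficient occurs at the even index $2\ell$, whence $\Delta(x)=V_{2\ell}x^{2\ell}+\mathcal{O}(x^{2\ell+1})$ and $\Delta_b(x)=2\delta b+V_{2\ell}x^{2\ell}+(\text{higher-order terms})$. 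Introducing the rescaling $x=|b|^{1/2\ell}\hat x$ turns the leading balance into $2\delta\,\sgn(b)+V_{2\ell}\hat x^{2\ell}=0$, which has a unique positive root $\hat x_*$ if and only if $\sgn(b)=-\sgn(\delta V_{2\ell})$, and this root is simple since $V_{2\ell}\hat x_*^{2\ell-1}\neq0$. A direct application of the implicit function theorem then persists $\hat x_*$ to a genuine simple zero $x_b\sim|b|^{1/2\ell}$ of the full $\Delta_b$ for $|b|$ small, producing the hyperbolic limit cycle; when $\sgn(b)=\sgn(\delta V_{2\ell})$ the two leading terms share the same sign and $\Delta_b$ has no zero, so no limit cycle exists.

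It then remains to package the zero into the geometric conclusion. I would check that the segment between the folds $0$ and $b$ is a genuine sliding (rather than crossing) region and identify its attracting/repelling character, confirm that the orbit through $x_b$ is a crossing orbit encircling it (using $\varphi_b^+(x_b)=\varphi^-(x_b)\approx -x_b$, which lands beyond the opposite fold), and argue that any small crossing periodic orbit must enclose the segment, so that the single simple zero $x_b$ accounts for all limit cycles in $U$; for $|b|$ small there are no sliding periodic orbits. The stability assertion follows from the sign of $\Delta_b'(x_b)$, which to leading order equals $\sgn(2\ell V_{2\ell}x_b^{2\ell-1})=\sgn(V_{2\ell})$, matching the claim. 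Specializing to $\ell=1$ recovers Proposition \ref{prop:phd1}.

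The main obstacle I anticipate lies in the degenerate case $\ell>1$: controlling the higher-order remainders uniformly in the rescaled variable so as to guarantee that $x_b$ is the \emph{only} zero of $\Delta_b$ in the whole crossing part of $U$ (ruling out spurious zeros at other scales) and that it is simple. A secondary, more bookkeeping difficulty is the careful sign analysis tying $\sgn(b)=-\sgn(\delta V_{2\ell})$ to the creation of an attracting (resp. repelling) sliding segment and to the correct stability type of the resulting limit cycle.
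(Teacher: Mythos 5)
Your proposal follows essentially the same route as the paper's proof: the identity $\varphi_b^+(x)=\varphi^+(x-b)+b$, the expansion $\Delta_b(x)=2\delta b+V_{2\ell}x^{2\ell}+\CO(x^{2\ell+1})+b\,\CO(x)+\CO(b^2)$, the blow-up $x\sim |b|^{1/2\ell}$ reducing the leading balance to $V_{2\ell}\hat x^{2\ell}+2\delta\,\sgn(b)=0$, the implicit function theorem to persist the simple root, and the sign of $\Delta_b'$ for stability are all exactly the paper's steps. The uniqueness/nonexistence issue you flag as your main anticipated obstacle is in fact treated no more explicitly in the paper's own proof, so your write-up is on par with it.
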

\begin{proof}
First, let $\Delta_0(x)$ be the displacement function of $Z$ defined in a neighborhood of $x=0$. From \eqref{expansion}, we known that
\[
\Delta_0(x)=V_{2\ell}x^{2\ell}+\CO(x^{2\ell+1}).
\]

Now, recall that $\Delta(x;b):=\delta(\varphi_b^+(x)-\varphi^-(x))$ (see \eqref{def:disp}), where $\varphi_b^+$ and $\varphi^-$ are the half-return maps of $Z^+(x-b,y)$ and $Z^-(x,y)$ associated with $\Sigma$. It is easy to see that if $\varphi^+$ is the half-return map of $Z^+(x,y)$, then $\varphi_b^+(x)=\varphi^+(x-b)+b.$ Therefore, taking into account that $(\f^+)'(0)=-1$, we get
\[
 	\begin{aligned}
 	\Delta(x;b)&=\delta(\varphi_b^+(x)-\varphi^-(x))\\
		&=\Delta_0(x) + 2\delta b + b O(x) + O(b^2) \\
 	& = V_{2\ell}x^{\ell} + 2 \delta b + O(x^{2 \ell +1}) +b O(x)+O(b^2).
 	\end{aligned}
\]

Notice that solutions of 
\begin{equation}\label{Deltab0}
\Delta_b(x)=0,\,\,\, x>|b|,
\end{equation}
correspond to crossing periodic solutions of $Z_b$. In addition, simple solutions of \eqref{Deltab0} correspond to hyperbolic limit cycles of $Z_b$.

Denote
\begin{equation}\label{Deltatilde}
	\begin{aligned}
		\widetilde \Delta(y;\beta  ):= \dfrac{\Delta(\beta   y; \mu \beta  ^{2\ell}  )}{\beta  ^{2 \ell}},
	\end{aligned}
\end{equation}
 where $\mu=-\sgn(\delta V_{2 \ell})$. Notice that
\begin{equation}\label{expansionDeltatilde}
	\begin{aligned}
		\widetilde \Delta(y;\beta  ) = & \dfrac{V_{2 \ell} \beta  ^{2 \ell} y^{2\ell}+2\delta \mu \beta  ^{2\ell}+ O(\beta ^{2\ell+1}y^{2\ell+1}) + \mu \beta ^{2\ell}O(\beta  y)}{ \beta ^{2\ell}}\\
		 = & V_{2 \ell}  y^{2\ell}+2\delta \mu + O(\beta  y^{2\ell+1}) + O(\beta  y) \\
		  = & V_{2 \ell}  y^{2\ell}+2\delta \mu + O(\beta ).
	\end{aligned}
\end{equation}
Then, for 
\begin{equation}\label{y0}
y_0 =  \sqrt[2\ell]{\left|\dfrac{2\delta}{V_{2\ell}}\right|},
\end{equation} we have
\[
\widetilde \Delta(y_0,0)=0\,\,\,\text{and}\,\,\,	\dfrac{\partial \widetilde \Delta}{\partial y}(y_0,0)= \pm 2\ell V_{2 \ell}  \left( \sqrt[2\ell]{\left|\dfrac{2\delta}{V_{2\ell}}\right|}\right)^{2\ell-1} \neq 0.
\]
Then, from the Implicit Function Theorem, there exists $y(\beta )$ such as $y(0)=y_0$ and $\widetilde \Delta(y(\beta );\beta )= 0$ for every $\beta $ in a neighborhood of $0$.

Therefore, from  \eqref{Deltatilde},
$	\Delta(\beta  y(\beta ); \mu \beta ^{2\ell}  )=\beta ^{2 \ell}\widetilde \Delta(y(\beta );\beta )= 0.
$
Then, by taking $\beta=\sqrt[2\ell]{\mu b}$, we have that $x(b )= \sqrt[2\ell]{\mu b}\,y(\sqrt[2 \ell]{\mu b}) $ is a solution of  $\Delta(x;b) = 0$ for $\mu b>0$, that is $\sgn( b)=\sgn(\mu)=-\sgn(\delta V_{2 \ell})$. In this case, 
$x(b)=\sqrt[2\ell]{\mu b} \,y_0+\CO(b)$ with $y_0\neq0$ give  in \eqref{y0} and, therefore, $x(b)>|b|$ for $|b|\neq0$ sufficiently small. Hence $x(b)$ corresponds to a crossing periodic solution of $Z_b$.

In addition, from \eqref{expansionDeltatilde},
\begin{equation}\label{derivativeDeltatilde}
	\begin{aligned}
		\dfrac{1}{b^{\frac{2\ell - 1}{2\ell}}}\dfrac{\partial \Delta}{\partial x}(x(b);b)= & \dfrac{2\ell V_{2 \ell}  \left(\sqrt[2\ell]{\mu b}y(\sqrt[2 \ell]{\mu b})\right)^{2\ell-1} + O(x^{2\ell}) + b O(1) + O(b ^2)}{b^{\frac{2\ell - 1}{2\ell}}} \\
		=& 2\ell V_{2\ell}O(b^{\frac{2\ell - 1}{2\ell}}) + O(b ) \neq 0
	\end{aligned}
\end{equation}
Therefore, the periodic solution associated to $x(b)$ is actually a limit cycle which is contained in $U$ for $|b|$ small enough.

Finally, from \eqref{derivativeDeltatilde},
		$\text{sign}\left(\dfrac{\partial\Delta}{\partial x}(x(b);b)\right) = \text{sign}(V_{2\ell}),$
	then the hyperbolic limit cycle is stable (resp. unstable) provided that $V_{2\ell}<0$ (resp. $V_{2\ell}>0$), which finishes the proof.
\end{proof}

\section*{Appendix B: Useful relationships}

In this appendix, we prove the relationships \eqref{proofrel} used in the proof of Theorem \ref{thm:unfold}.

\begin{lemma}\label{lemma1} Consider the values defined in \eqref{defsums}. Then, the following relationships holds:
	\begin{equation}\label{rel-lemma2}
		\begin{aligned}
			s_2^{\pm} & =  \dfrac{f^{\pm}_0}{a^{\pm}} \left[ (a_i-\alpha)s_1^{\pm} \mp \delta a^{\pm} a_i^{2k-1} \mp\delta (2k-1) a^{\pm} \alpha a_i^{2k-2} \right] ,\\ 
			s_4^{\pm} & =  \dfrac{f^{\pm}_0}{a^{\pm}} \left[ (a_i-\alpha)s_3^{\pm}+ 2 s_1^{\pm} \mp\delta (2k-2)(2k-1) a^{\pm} \alpha a_i^{2k-3} \right],
		\end{aligned}
	\end{equation}
	where 
	\begin{equation}\label{eqf:alpha}
	\alpha=- \sum_{j=1}^{2k-2} a_j.
\end{equation}
\end{lemma}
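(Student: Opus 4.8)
The plan is to prove the two relationships in Lemma \ref{lemma1} by exploiting the algebraic structure of the coefficients $C_j^{\pm}(\Lambda,\e)$, which are defined implicitly through the interpolation conditions \eqref{relationship2}. The sums $s_1^{\pm},\ldots,s_4^{\pm}$ in \eqref{defsums} are nothing but the values of $P^{\pm}_{\e\Lambda}$ and its derivatives (and their $\e$-derivatives) at $x=\e a_i$ after stripping off the appropriate powers of $\e$, as recorded in \eqref{eq:P10epsilonaux}. Since the polynomial $P^{\pm}_{\e\Lambda}$ is constructed precisely so that it interpolates the prescribed values $\xi_j^{\pm}(\e)$ at the nodes $\e a_j$, $j\in\{1,\ldots,2k-2\}$, I would first write down the Lagrange interpolation formula for $P^{\pm}_{\e\Lambda}$, namely
\[
P^{\pm}_{\e\Lambda}(x)=\sum_{j=1}^{2k-2}\xi_j^{\pm}(\e)\,L_j(x),\qquad L_j(x)=\prod_{\substack{m=1\\ m\neq j}}^{2k-2}\frac{x-\e a_m}{\e a_j-\e a_m},
\]
where the $L_j$ are the Lagrange basis polynomials associated to the nodes. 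The key point is that $\xi_j^{\pm}(\e)=\mp\delta\e^{2k-1}(a^{\pm}a_j^{2k-1}+\e a_j^{2k}f^{\pm}(\e a_j))$ from \eqref{eq:Yi}, so its leading behaviour in $\e$ is governed by $a^{\pm}$ and its next-order term by $f^{\pm}_0$.

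Next I would extract the relevant Taylor coefficients in $\e$. Writing $C_j^{\pm}(\Lambda,\e)=C_j^{\pm}(\Lambda,0)+\e\,\partial_\e C_j^{\pm}(\Lambda,0)+\CO(\e^2)$, the quantities $s_1^{\pm},s_3^{\pm}$ collect the zeroth-order ($\e=0$) data and $s_2^{\pm},s_4^{\pm}$ the first-order ($\partial_\e$) data. Because $\xi_j^{\pm}(\e)$ factors as $\mp\delta\e^{2k-1}a^{\pm}(a_j^{2k-1}+\frac{f^{\pm}_0}{a^{\pm}}\e a_j^{2k}+\cdots)$, the zeroth-order coefficients $C_j^{\pm}(\Lambda,0)$ are proportional to $\mp\delta a^{\pm}$ times a quantity depending only on the nodes $a_j$, while the first-order coefficients $\partial_\e C_j^{\pm}(\Lambda,0)$ pick up the factor $f^{\pm}_0$. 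This is exactly the mechanism that produces the ratio $f^{\pm}_0/a^{\pm}$ in front of the brackets in \eqref{rel-lemma2}. I would therefore differentiate the interpolation identity in $\e$, evaluate at $\e=0$, and relate $s_2^{\pm},s_4^{\pm}$ to $s_1^{\pm},s_3^{\pm}$ together with explicit node-dependent correction terms.

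The correction terms involving $a_i^{2k-1}$, $a_i^{2k-2}$, $a_i^{2k-3}$ and the constant $\alpha=-\sum_{j=1}^{2k-2}a_j$ from \eqref{eqf:alpha} arise from the next-to-leading behaviour of the interpolation data and from the elementary symmetric function that equals $\alpha$. Concretely, I expect $\alpha$ to appear as the coefficient of $x^{2k-3}$ in the monic polynomial $\prod_{j=1}^{2k-2}(x-a_j)$ (up to sign), which encodes the sum of the nodes and enters when one compares the interpolant of $a_j^{2k-1}$ against the shifted interpolant of $a_j^{2k}$ at the node $a_i$. The cleanest route is to observe that $x^{2k-1}$ and $x^{2k}$ are one degree too high to be interpolated exactly by a degree $2k-2$ polynomial, so their interpolants differ from the true monomials by a multiple of the node polynomial $\prod_{j=1}^{2k-2}(x-a_j)$; reading off the value and derivative of this remainder at $x=a_i$ yields precisely the $a_i$-powers and the factor $\alpha$. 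I would carry this out separately for the value (giving the $s_2^{\pm}$ relation) and for the second derivative (giving the $s_4^{\pm}$ relation, where the extra $+2s_1^{\pm}$ term comes from differentiating the product twice).

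The main obstacle will be the bookkeeping in the $\e$-expansion: one must track how each power of $\e$ in $\xi_j^{\pm}(\e)$ and in the inverse Vandermonde matrix $[H(\Lambda,\e)]^{-1}$ combines to give $C_j^{\pm}(\Lambda,\e)$ to first order, and then match these against the derivative sums $s_2^{\pm},s_4^{\pm}$ without sign errors in the $\pm\delta$ factors. I expect the cleanest presentation to sidestep the explicit inverse Vandermonde entries entirely by working directly with the Lagrange interpolation remainder formula, which makes the appearance of $\alpha$ and the node powers transparent and reduces the proof to comparing polynomial coefficients rather than manipulating the matrix $b_{ij}$.
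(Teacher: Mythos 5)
Your strategy is essentially the paper's, phrased in interpolation--remainder language instead of explicit factorizations. The paper defines $T^{\pm}_{\Lambda}(x)=\sum_j C^{\pm}_j(\Lambda,0)x^j\pm\delta a^{\pm}x^{2k-1}$ and $U^{\pm}_{\Lambda}(x)=\sum_j\partial_\e C^{\pm}_j(\Lambda,0)x^j\pm\delta f^{\pm}_0x^{2k}$, observes that both vanish at $0,a_1,\dots,a_{2k-2}$, factors them through the node polynomial, pins down the extra root of $U^{\pm}_{\Lambda}$ as $\alpha$ because its $x^{2k-1}$-coefficient vanishes (sum of roots equals zero), and then converts $U^{\pm}_{\Lambda}=\tfrac{f^{\pm}_0}{a^{\pm}}(x-\alpha)T^{\pm}_{\Lambda}$ into the coefficient recursion \eqref{eq:dercoefci}, which it telescopes to obtain $s_2^{\pm}$ and $s_4^{\pm}$. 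Your plan reads the same factorization off at the level of functions: differentiate the identity once and twice in $x$ and evaluate at $x=a_i$, which is exactly where $(a_i-\alpha)s_1^{\pm}$, the $+2s_1^{\pm}$ from Leibniz, and the boundary monomial terms come from. Executed this way, the finish is arguably cleaner than the paper's index manipulations, but the key lemma (divisibility by the node polynomial, extra root $=\alpha$) is identical.

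One concrete point needs fixing before the argument closes. Your Lagrange formula uses only the $2k-2$ nodes $\e a_1,\dots,\e a_{2k-2}$ and therefore produces a polynomial of degree $2k-3$, which is \emph{not} $P^{\pm}_{\e\Lambda}$: the latter has degree $2k-2$ and vanishing constant term. You must include $x=0$ as a $(2k-1)$st interpolation node (the condition $P^{\pm}_{\e\Lambda}(0)=0$), so that the relevant node polynomial is $x\prod_{j=1}^{2k-2}(x-a_j)$, of degree $2k-1$. With only $\prod_{j=1}^{2k-2}(x-a_j)$ your degree count fails: $x^{2k-1}$ would then be two degrees above the interpolation space and its remainder would not be a constant multiple of the node polynomial, so the clean $a_i$-powers you need would not appear. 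With the corrected node set, $x^{2k-1}$ is exactly one degree too high (remainder a constant times $x\prod(x-a_j)$, no $\alpha$) and $x^{2k}$ is two degrees too high (remainder carries the linear factor $x-\alpha$, with $\alpha=-\sum_{j}a_j$ forced by the vanishing of the $x^{2k-1}$-coefficient of the remainder). After that repair your computation reproduces \eqref{rel-lemma2}.
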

\begin{proof}

First, from \eqref{polinomioP}, we have
	\[\label{eqaux:polyP2}
\dfrac{{P}_{\Lambda}^{\pm}(\varepsilon a_i)}{\varepsilon^{2k-1}} = \sum_{j=1}^{2k-2}C_j^{\pm}(\Lambda,\varepsilon)a_i^j.
	\]
	Thus, from  \eqref{relationship2}, the following relationship hold for every $\e>0$ sufficiently small:
	\begin{equation}\label{eqaux:polyP3}
		\begin{aligned}
			\sum_{j=1}^{2k-2}C_j^{\pm}(\Lambda,\varepsilon)a_i^j=\mp\delta a_i^{2k-1}(a^{\pm} + a_i\varepsilon f^{\pm}(\varepsilon a_i) ).
		\end{aligned}
	\end{equation}
	
	Now, define the polynomial
	\begin{equation}\label{eq:T}
		T^{\pm}_{\Lambda}(x) = \sum_{j=1}^{2k-2}C^{\pm}_j(\Lambda, 0)x^j \pm\delta a^{\pm} x^{2k-1}.
	\end{equation}
Notice that $\text{deg}(T^{\pm}_{\Lambda})=2k-1$ and, by taking $\e=0$ in \eqref{eqaux:polyP3}, we get
	\[T^{\pm}_{\Lambda}(0)=  0\,\,\,\text{and}\,\,\, T^{\pm}_{\Lambda}(a_i) = 0,\,\,\, \text{for}\,\,\,  i\in\{1, \dots,2k-2\}.\]
	Thus, the polynomial $T^{\pm}_{\Lambda}$ can be factorized as follows:
	\begin{equation}\label{eqaux:T}
		T^{\pm}_{\Lambda}(x)= \pm\delta a^{\pm}x(x-a_1)(x-a_2)\dots (x-a_{2k-2}).
	\end{equation}
	
Also, define the polynomial
\begin{equation}\label{eq:U}
	U^{\pm}_{\Lambda}(x)=\sum_{j=1}^{2k-2}\dfrac{\partial C^{\pm}_j}{\partial \e}(\Lambda, 0)x^j\pm\delta f^{\pm}_0x^{2k}.
\end{equation}
Notice that $\text{deg}(U^{\pm}_{\Lambda}) =2k$ and,  by taking the derivative of \eqref{eqaux:polyP3} at $\e=0$, we get
\[U^{\pm}_{\Lambda}(0)=0,  U^{\pm}_{\Lambda}(a_i)=0, \text{for}  i\in\{1, 2, \dots, 2k-2\}.\]
Thus, the polynomial $U^{\pm}_{\Lambda}$ can be factorized as follows:
\begin{equation}\label{eqaux:U}
	U^{\pm}_{\Lambda}(x)= \pm\delta f^{\pm}_0x(x-\alpha^{\pm})(x-a_1)(x-a_2)\dots (x-a_{2k-2}),
\end{equation}
where $\alpha^{\pm}$ is, a priori, an unknown root of $U^{\pm}_{\Lambda}$. Nevertheless, since the coefficient of the monomial $x^{2k-1}$ of $U^{\pm}_{\Lambda}$ is zero, we know that the sum of its roots must vanish, which provides that $\alpha^+=\alpha^-=\alpha$, where $\alpha$ is given in \eqref{eqf:alpha}.

From the equations \eqref{eqaux:T} and \eqref{eqaux:U}, we have 
\[
U^{\pm}_{\Lambda}(x)=\dfrac{f^{\pm}_0}{a^{\pm}}(x-\alpha)T^{\pm}_{\Lambda}(x)
\]
and, therefore, from \eqref{eq:T} and \eqref{eq:U}, we get the following recursive relationships:
\begin{equation}\label{eq:dercoefci}
	\begin{aligned}
	\dfrac{\partial C^{\pm}_{1}}{\partial \e}(\Lambda,0)  & = -\dfrac{f^{\pm}_0}{a^{\pm}} \alpha C^{\pm}_1(\Lambda,0), \\
	\dfrac{\partial C^{\pm}_{j}}{\partial \e}(\Lambda,0) & = \dfrac{f^{\pm}_0}{a^{\pm}} ( C_{j-1}(\Lambda,0)  - \alpha C_j (\Lambda,0) ), \,\,\, 2\leq j\leq 2k-2,\\
	C_{2k-2}^{\pm}(\Lambda,0) &=\pm\delta a^{\pm}\alpha .
	\end{aligned}
\end{equation}

For the sake  of simplicity, in what follows, we denote $C^{\pm}_{i}=C^{\pm}_{i}(\Lambda, 0).$ By using \eqref{defsums}, \eqref{eq:dercoefci}, and \eqref{eqaux:polyP3}, we obtain $s_2$ as follows:
\[
	\begin{aligned}
		s_2^{\pm} &  =  \dfrac{\partial C^{\pm}_{1}}{\partial \e}(\Lambda,0)+ \sum_{j=2}^{2k-2} a_i^{j-1} j \dfrac{\partial C^{\pm}_{j}}{\partial \e}(\Lambda,0) \\
		& = \dfrac{f^{\pm}_0}{a^{\pm}}\left[  -\alpha C^{\pm}_1 - \alpha \sum_{j=2}^{2k-2} a_i^{j-1} j C^{\pm}_j +\sum_{j=2}^{2k-2} a_i^{j-1} j C^{\pm}_{j-1} \right]\\
		& =\dfrac{f^{\pm}_0}{a^{\pm}}\left[  -\alpha s^{\pm}_1 + \sum_{j=2}^{2k-2} a_i^{j-1} j C^{\pm}_{j-1} \right]\\
		& = \dfrac{f^{\pm}_0}{a^{\pm}}\left[ -\alpha  s^{\pm}_1 +  \sum_{j=1}^{2k-3} a_i^{j} (j+1) C^{\pm}_{j} \right]\\
		& =\dfrac{f^{\pm}_0}{a^{\pm}}\left[  -\alpha  s^{\pm}_1 +  \sum_{j=1}^{2k-2} a_i^{j} (j+1) C^{\pm}_{j} -a_i^{2k-2}(2k-1)C^{\pm}_{2k-2}\right]\\
		& =\dfrac{f^{\pm}_0}{a^{\pm}}\left[  -\alpha  s^{\pm}_1 +  \sum_{j=1}^{2k-2} a_i^{j} j C^{\pm}_{j}+\sum_{j=1}^{2k-2} a_i^{j} C^{\pm}_{j} -a_i^{2k-2}(2k-1)C^{\pm}_{2k-2} \right]\\
		& =\dfrac{f^{\pm}_0}{a^{\pm}}\left[  -\alpha  s^{\pm}_1 +  a_i s^{\pm}_1\mp \delta a^{\pm} a_i^{2k-1} \mp\delta (2k-1)a^{\pm}\alpha a_i^{2k-2}\right].
	\end{aligned}
\]
Hence, we conclude that the first relationship of \eqref{rel-lemma2} holds.

Analogously, by using \eqref{defsums} and \eqref{eq:dercoefci},
we compute $s^{\pm}_4$ as follows:
\begin{equation}\label{lemma2:aux1}
	\begin{aligned}
		s^{\pm}_4  
		& =  \dfrac{f^{\pm}_0}{a^{\pm}}\left[\sum_{j=2}^{2k-2} a_i^{j-2} j(j-1) C^{\pm}_{j-1} -\alpha\sum_{j=2}^{2k-2} a_i^{j-2} j(j-1) C^{\pm}_{j} \right] \\
		& = \dfrac{f^{\pm}_0}{a^{\pm}}\left[\sum_{j=2}^{2k-2} a_i^{j-2} j(j-1) C^{\pm}_{j-1} -\alpha s^{\pm}_3 \right] \\
		& = \dfrac{f^{\pm}_0}{a^{\pm}}\left[\sum_{j=1}^{2k-3} a_i^{j-1} (j+1)j C^{\pm}_{j} -\alpha s^{\pm}_3 \right] \\
		&  = \dfrac{f^{\pm}_0}{a^{\pm}}\left[\sum_{j=1}^{2k-2} a_i^{j-1} (j+1)j C^{\pm}_{j} -a_i^{2k-3} (2k-1)(2k-2)C^{\pm}_{2k-2}   -\alpha s^{\pm}_3 \right] \\
		& = \dfrac{f^{\pm}_0}{a^{\pm}}\left[\sum_{j=1}^{2k-2} a_i^{j-1} (j+1)j C^{\pm}_{j}   \mp\delta (2k-1)(2k-2)  a^{\pm}\alpha a_i^{2k-3}   -\alpha s^{\pm}_3 \right].
	\end{aligned}
\end{equation}
Finally, notice that
\begin{equation}\label{eqaux1:rel2}
		\sum_{j=1}^{2k-2} a_i^{j-1} (j+1)j C^{\pm}_{j}  =\sum_{j=1}^{2k-2} a_i^{j-1} (j-1)j C^{\pm}_{j} +2 \sum_{j=1}^{2k-2} a_i^{j-1}j C^{\pm}_{j} = a_i s^{\pm}_3 + 2s^{\pm}_1
\end{equation}
Hence, the second relationship of \eqref{rel-lemma2} follows by putting \eqref{lemma2:aux1} and \eqref{eqaux1:rel2} together.
\end{proof}

\section*{Acknowledgments}

DDN is partially supported by S\~{a}o Paulo Research Foundation (FAPESP) grants 2022/09633-5, 2019/10269-3, and 2018/13481-0, and by Conselho Nacional de Desenvolvimento Cient\'{i}fico e Tecnol\'{o}gico (CNPq) grant 309110/2021-1.
LAS is partially supported by Coordena\c{c}\~{a}o de Aperfei\c{c}oamento de Pessoal de N\'{i}vel Superior (CAPES) grant 001.

\bibliographystyle{abbrv}
\bibliography{references}

\end{document}